\documentclass[a4paper]{amsart}
\usepackage{aliascnt, amssymb, enumerate, enumitem, hyperref, mathtools}
\usepackage[all]{xy}

\def\today{\number\day\space\ifcase\month\or   January\or February\or
   March\or April\or May\or June\or   July\or August\or September\or
   October\or November\or December\fi\   \number\year}

\theoremstyle{definition}

\newaliascnt{thmCt}{lma}
\newtheorem{thm}[thmCt]{Theorem}
\aliascntresetthe{thmCt}

\newtheorem*{thm*}{Theorem}

\newaliascnt{corCt}{lma}

\aliascntresetthe{corCt}

\newaliascnt{propCt}{lma}
\newtheorem{prop}[propCt]{Proposition}
\aliascntresetthe{propCt}

\newaliascnt{pgrCt}{lma}

\aliascntresetthe{pgrCt}

\newaliascnt{dfCt}{lma}
\newtheorem{df}[dfCt]{Definition}
\aliascntresetthe{dfCt}

\newaliascnt{remCt}{lma}
\newtheorem{rem}[remCt]{Remark}
\aliascntresetthe{remCt}

\newaliascnt{remsCt}{lma}

\aliascntresetthe{remsCt}

\newaliascnt{egCt}{lma}
\newtheorem{eg}[egCt]{Example}
\aliascntresetthe{egCt}

\newaliascnt{egsCt}{lma}

\aliascntresetthe{egsCt}

\newaliascnt{qstCt}{lma}

\aliascntresetthe{qstCt}

\newaliascnt{pbmCt}{lma}

\aliascntresetthe{pbmCt}

\newaliascnt{notaCt}{lma}

\aliascntresetthe{notaCt}

\newaliascnt{cnjCt}{lma}

\aliascntresetthe{cnjCt}

\newcounter{theoremintro}
\newtheorem{thmintro}[theoremintro]{Theorem}

\newcommand{\N}{{\mathbb{N}}}
\newcommand{\T}{{\mathbb{T}}}

\newcommand{\supp}{{\mathrm{supp}}}

\newcommand{\andeqn}{\,\,\, {\mbox{and}} \,\,\,}

\newcommand{\ct}{continuous}

\newcommand\interior[1]{{#1}^{\circ}}
\newcommand{\ov}{\overline}

\title{An embedding version of Rubin's theorem}

\date{\today}
\thanks{The author was supported by Kungl. Vetenskapsakademiens stiftelser.
}

\author{Jan Gundelach}
\address[Jan Gundelach]{Department of Mathematical Sciences, Chalmers University of
Technology and University of Gothenburg, Gothenburg SE-412 96, Sweden.}
\email[]{jangund@chalmers.se}
\urladdr{www.chalmers.se/en/persons/jangund/}

\begin{document}

\begin{abstract}
    Rubin's theorem asserts that if $\Gamma\curvearrowright X$ and $\Delta\curvearrowright Y$ are Rubin actions, then any group isomorphism $\Gamma \cong \Delta$ induces an equivariant homeomorphism $Y\cong X$. We provide an embedding version of Rubin's theorem highlighting group embeddings that induce a spatial equivariant map of a certain form. We further showcase instances of such embeddings between generalized Brin-Thompson groups.
\end{abstract}
\maketitle

\section{Introduction}
In the 1980s, Rubin studied rigidity phenomena of faithful group actions $\Gamma \curvearrowright X$ on various structured objects $X$. The theme is a reconstruction of these objects from their group of transformations. Among other applications, his results also apply to topological spaces and highlight a class of faithful actions $\Gamma \curvearrowright X$ that allow for a reconstruction of the topological space $X$ and that we will refer to as \textit{Rubin actions} in the following; see \autoref{Rubin df}. Concretely, with the terminology of \cite{BelEllMat_Rubin_2022} and \cite{Mat_tfg_2015}, Rubin's theorem \cite[Corollary 3.5]{Rub_reconstruction_1989} states that if $\Gamma$ is a group admitting Rubin actions $\Gamma\curvearrowright X$ and $\Gamma \curvearrowright Y$, then there is a unique $\Gamma$-equivariant homeomorphism $Y\cong X$.\par
Classes of groups that admit a canonical Rubin action include topological full groups of minimal effective \'etale groupoids with Cantor unit space, which received more and more attention recently, both in topological dynamics, operator algebras, and in the structure theory of ample groupoids in general.\par Another way of interpreting Rubin's theorem is that, given two Rubin actions $\Gamma \curvearrowright X$ and $\Delta \curvearrowright Y$, a group isomorphism $\Phi\colon \Gamma \to \Delta$ induces a unique spatial homeomorphism $\rho\colon Y\to X$ such that the following diagram commutes for every $\gamma \in \Gamma$:
\[\xymatrix{X \ar[d]_-{\gamma}& Y \ar[l]_-{\rho} \ar[d]^-{\Phi(\gamma)}\\ X & Y.\ar[l]_-{\rho}}\]
In this language, another question asked by Rubin on \cite[p. 493]{Rub_reconstruction_1989} is whether there are any reasonable assumptions on a group \textit{embedding} $\Phi\colon \Gamma \to \Delta$, instead of a group isomorphism, so that $\Phi$ still induces a $\Phi$-equivariant spatial map in the sense of the commuting diagram above. In this paper, we address this question by using the framework and notation from \cite{BelEllMat_Rubin_2022} and \cite{Mat_tfg_2015}. Our main result reads as follows.

\begin{thmintro}\label{Rubin emb intro} (See \autoref{rubin emb char}.)
     Let $X$ and $Y$ be locally compact Hausdorff spaces with no isolated points, let $\Gamma \curvearrowright X$ be a Rubin action, let $\Delta\curvearrowright Y$ be a faithful action, and let $\Phi\colon \Gamma \rightarrow \Delta$ be an injective group homomorphism. The following are equivalent:
    \begin{enumerate}
        \item $\Phi(\Gamma)\curvearrowright Y$ has no global fixed points, is locally dense on saturated subsets, and satisfies $\Phi(\Gamma_{\textup{rsupp}_X(\gamma)}) = \Phi(\Gamma)_{\textup{rsupp}_Y(\Phi(\gamma))}$ for all $\gamma\in \Gamma$.
        \item There is a unique \ct, surjective and $\Phi$-equivariant spatial map $\rho\colon Y\to X$ such that $\rho^{-1}(\textup{rsupp}_X(\gamma)) = \textup{rsupp}_Y(\Phi(\gamma))$ for all $\gamma \in \Gamma$.
    \end{enumerate}
    \end{thmintro}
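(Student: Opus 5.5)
The plan is to reduce to Rubin's theorem in its isomorphism form. For this I will pass to the induced action of $\Gamma\cong\Phi(\Gamma)$ on a quotient of $Y$, and use the regular-open algebra description of Rubin actions from \cite{BelEllMat_Rubin_2022}.

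\textbf{(1)$\Rightarrow$(2).} Write $\Delta_0=\Phi(\Gamma)$. First I associate to each $y\in Y$ the family
\[\mathcal{U}_y=\{\textup{rsupp}_X(\gamma) : y\in \textup{rsupp}_Y(\Phi(\gamma))\}.\]
The identity $\Phi(\Gamma_{\textup{rsupp}_X(\gamma)})=(\Delta_0)_{\textup{rsupp}_Y(\Phi(\gamma))}$ says that $\Phi$ carries rigid stabilisers of regular supports to rigid stabilisers. In a Rubin action, the regular open set $\textup{rsupp}_X(\gamma)$ is determined algebraically by its rigid stabiliser: it is the regular support of the group, and inclusion of regular supports corresponds to inclusion of rigid stabilisers. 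Hence $\textup{rsupp}_X(\gamma)\mapsto \textup{rsupp}_Y(\Phi(\gamma))$ is a well-defined, order-preserving and injective map between the Boolean algebras generated by regular supports. It also preserves intersections and complements, since these are expressed through commutation of rigid stabilisers.

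Local density on saturated subsets gives the key ingredient: for each $y$, the images under $\rho$-to-be of the neighbourhoods in $\mathcal{U}_y$ shrink to a single point. Concretely, I will show that $\bigcap_{U\in\mathcal{U}_y}\overline{U}$ is a singleton $\{x\}$ and set $\rho(y)=x$. Nonemptiness follows from compactness-type arguments: pass to $X$'s one-point compactification if needed, and use that local density lets us find compact $\overline{U}\subseteq X$ in the filter. Uniqueness of the point follows because $X$ is Hausdorff and regular supports form a basis, which is part of the Rubin assumption.

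The absence of global fixed points ensures that every $y$ lies in some $\textup{rsupp}_Y(\Phi(\gamma))$, so $\mathcal{U}_y\neq\emptyset$. With $\rho$ in hand, the remaining properties follow in order:
\begin{itemize}
\item Continuity, since preimages of basic sets $\textup{rsupp}_X(\gamma)$ equal $\textup{rsupp}_Y(\Phi(\gamma))$, which are open.
\item Equivariance, since $\textup{rsupp}(\delta\gamma\delta^{-1})=\delta\,\textup{rsupp}(\gamma)$ on both sides.
\item Surjectivity, since the image is dense (regular supports on $Y$ are nonempty for $\gamma\neq 1$ by faithfulness) and closed by the local compactness argument.
\end{itemize}
Uniqueness of $\rho$ holds because the preimage condition pins down $\rho(y)$ on a basis.

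\textbf{(2)$\Rightarrow$(1).} This direction is direct:
\begin{itemize}
\item Since $\rho$ is surjective and $\Gamma$ has no global fixed points on $X$ (Rubin), equivariance gives no global fixed points for $\Phi(\Gamma)$.
\item The preimage identity and equivariance translate rigid stabilisers exactly, because $\Phi(\gamma)$ is supported in $\rho^{-1}(U)$ iff $\gamma$ is supported in $U$, by surjectivity of $\rho$.
\item Local density of $\Gamma\curvearrowright X$ pulls back along $\rho$ to local density on saturated sets $\rho^{-1}(U)$.
\end{itemize}

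The main obstacle is showing that $\bigcap_{U\in\mathcal{U}_y}\overline{U}$ is exactly one point, in particular that it is nonempty without compactness of $X$. I would first attempt this by mimicking Rubin's reconstruction of points as ultrafilter-like families of regular open sets, and using local density to exclude "points at infinity".
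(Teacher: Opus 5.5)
\textbf{Verdict.} Your direction (2)$\Rightarrow$(1) is essentially the paper's. One detail is missing there: for local density you still have to shrink an arbitrary saturated neighbourhood of $y$ to one of the form $\rho^{-1}(U)$, as the paper does via $\mathcal{R}_Y$. The genuine gap is in (1)$\Rightarrow$(2), at exactly the step you postpone.

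\textbf{What $\mathcal{U}_y$ does not give you.} Your family $\mathcal{U}_y$ only yields $\rho(y)\in\bigcap_{U\in\mathcal{U}_y}\ov{U}$. To turn this into an anchor map you need three things you do not establish:
(a) the finite intersection property of $\mathcal{U}_y$;
(b) a member of $\mathcal{U}_y$ with compact closure;
(c) that the cluster set is a single point and that $\rho(y)\in\textup{rsupp}_X(\gamma)\iff y\in\textup{rsupp}_Y(\Phi(\gamma))$.

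Local regularity alone does make $\textup{rsupp}_X(\gamma)\mapsto\textup{rsupp}_Y(\Phi(\gamma))$ an order isomorphism on single regular supports, since $\supp'\subseteq\textup{rsupp}$ on both sides. It does not give (a). Take Thompson's $V\curvearrowright\{0,1\}^{\N}=X$ and the diagonal action on $Y=X\times X$. Then $\Phi$ is locally regular and of full support, yet for $a\neq b$ the family $\mathcal{U}_{(a,b)}$ contains disjoint sets. Transferring meets and disjointness to $Y$ needs a locally-moving-type property of $\Phi(\Gamma)\curvearrowright Y$. The paper extracts this from local density on saturated subsets before building $P\colon\mathcal{R}_Y\to\mathcal{R}_X$; your appeal to ``commutation of rigid stabilisers'' presupposes it.

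For (c), $\mathcal{U}_y$ is not an ultrafilter, so Hausdorffness and the basis property do not force its cluster set to be a point. Your continuity and anchor claims simply assume the identity in (c). This is the heart of the paper's proof. It transports an ultrafilter $\mathcal{F}$ converging to $y$ by $P$ and shows $\lim P(\mathcal{F})\in P(U)\iff y\in U$. Both sides are rewritten as orbit conditions of ultrafilters under localized subgroups (Belk--Elliott--Matucci, Proposition 3.5(2)). That one equivalence gives well-definedness, the anchor identity and continuity; your proposal has no substitute for it.

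Likewise, ``the image is closed by the local compactness argument'' is not an argument, since continuous images of locally compact spaces need not be closed. The paper instead pulls back an ultrafilter converging to $x$ and finds a cluster point in $Y$.

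\textbf{Step (b) fails under the literal hypothesis.} You rightly single out (b), but it cannot be derived from the hypothesis as the paper literally defines it. ``Locally dense on saturated subsets'' only says that the sets $\textup{srs}_Y(\Phi(\tau))$ containing $y$ form a saturated neighbourhood basis at $y$. Take Thompson's $F\curvearrowright X=(0,1)$ and $Y=(0,1)\sqcup\mathbb{R}$, with $f\cdot t=t+\log_2 f'(0)$ on $\mathbb{R}$. Then:
\begin{enumerate}
\item $\textup{rsupp}_Y(\Phi(\gamma))$ is $\textup{rsupp}_X(\gamma)\sqcup\mathbb{R}$ or $\textup{rsupp}_X(\gamma)$ according as $\gamma'(0)\neq 1$ or not.
\item Local regularity holds, because $\supp'_X(\tau)\subseteq\textup{rsupp}_X(\gamma)$ with $\tau'(0)\neq 1$ forces $\gamma'(0)\neq 1$.
\item There are no global fixed points.
\item Every saturated neighbourhood of $t\in\mathbb{R}$ contains $\mathbb{R}\sqcup(0,\varepsilon)=\textup{srs}_Y(\Phi(\tau_\varepsilon))$ for some dyadic $\varepsilon$. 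Here $\tau_\varepsilon$ is a rescaled generator with open support $(0,\varepsilon)$. At points of $(0,1)$ the basis condition is clear.
\end{enumerate}
So (1) holds in the paper's sense. Yet an anchor map would need $\rho(t)\in\bigcap_\varepsilon(0,\varepsilon)=\emptyset$, so (2) fails. The paper's assertion that some $\gamma$ satisfies $y\in\textup{srs}_Y(\Phi(\gamma))$ with $\supp_X(\gamma)$ compact fails here too.

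\textbf{What would make (b) work.} Genuine orbit density in the saturated topology suffices: $y\in\textup{int}^{Sat}\bigl(\ov{\Phi(\Gamma)_U\cdot y}^{Sat}\bigr)$ for saturated open $U\ni y$. It lets you move $y$ by some element of $\Phi(\Gamma)_U$ into $\textup{srs}_Y(\Phi(\tau))$ with $\supp_X(\tau)$ compact, and then conjugate back. It is also what the paper tacitly uses when it picks $\Phi(\gamma)\in\Phi(\Gamma)_U$ with $\Phi(\gamma)\cdot y\in W$. So the statement, and your argument, need a stronger hypothesis such as this one; for compact $X$, (b) at least is automatic. Even then, (c) still has to be supplied.
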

We call an embedding $\Phi$ as in $(1)$ a \textit{Rubin embedding} and the map $\rho$ as in $(2)$ its \textit{anchor map}. 

\section{Spatial realization}\label{spatial realization chapter}
In this section, we follow the construction in \cite[Section 3]{Mat_tfg_2015} and in particular of \cite[Theorem 3.5]{Mat_tfg_2015} in the more general framework of \cite{BelEllMat_Rubin_2022} and of a group embedding that is not necessarily an isomorphism.

\begin{df}\label{localized subgroups}
    Let $\Gamma$ be a group acting on a topological space $X$. The \textit{open support} of $\gamma\in \Gamma$ is given by $\supp'_X(\gamma) := \{x\in X\colon \gamma \cdot x \neq x\}$, the \textit{support} by $\supp_X(\gamma) := \overline{\supp'_X(\gamma)}$ and the \textit{regular support} by $\textup{rsupp}_X(\gamma) := \interior{\supp_X(\gamma)}$. For an open set $U\subseteq X$, we define the associated \textit{localized subgroup} as
    \[ \Gamma_U := \{ \gamma \in \Gamma\colon \,\,\supp'_X(\gamma)\subseteq U\}.\]
\end{df}

\begin{df}\label{Rubin df}
    Let $\Gamma$ be a group and let $X$ be a Hausdorff topological space. A faithful action $\Gamma \curvearrowright X$ is called 
    \begin{enumerate}
        \item \textit{locally dense}, if for all open $U\subseteq X$ and all $p\in U$, we have $p\in \interior{(\overline{\Gamma_U \cdot p})}$;
        \item \textit{Rubin}, if it is locally dense and $X$ is locally compact with no isolated points;
        \item \textit{locally moving}, if $\Gamma_U \neq \{1\}$ for all nonempty open sets $U\subseteq X$.
    \end{enumerate}
\end{df} 

\begin{rem}\label{lc moving weaker}
    Note that any Rubin action is locally moving because if $\Gamma_U = \{1\}$ for some nonempty open set $U\subseteq X$, then local density would imply that any $p\in U$ would be isolated. However, unlike locally dense actions, locally moving actions may have nowhere dense orbits under localized subgroups.
\end{rem}

\begin{eg}\label{thompson Rubin examples}
    The canonical faithful action of Thompson's group $F\curvearrowright [0,1]$ is locally moving, but admits global fixed points at $0$ and $1$. The faithful actions of Thompson's groups $F\curvearrowright (0,1),\, T\curvearrowright \T$ and $V\curvearrowright \{0,1\}^{\N}$ are examples of Rubin actions $\Gamma\curvearrowright X$, as we show next. All the ambient spaces are locally compact and have no isolated points. In the first two cases, any $p\in U$ is contained in an open dyadic interval $p\in D\subseteq U$ and by scaling partitions of $X$ to $D$ and leaving $X\setminus \ov{D}$ fixed, it follows that $\Gamma_D\curvearrowright D$ and $\Gamma\curvearrowright X$ are conjugate. Now minimality shows $\ov{\Gamma_D\cdot p} = \ov{D}$ and hence the actions are locally dense.\par The third action on the Cantor space is an instance of a bigger family of Rubin actions treated in \autoref{sfts}.
\end{eg}

\begin{df}\label{Brin-Thompson}
    Let $m\in \N$ and let $k_1,\dots k_m\in \N_{\geq 2}$ be alphabet sizes. For each $i=1,\dots,m$, we set $X_{k_i}:=\{0,\dots,k_i-1\}^\N$ and denote its elements as tuples of infinite strings without separators. A \textit{table} for the Cantor set $X=\prod_{i=1}^m X_{k_i}$ of size $l\in \N$ is a matrix of the form \[\begin{pmatrix} v\\ u\end{pmatrix}= \begin{pmatrix} v^{(1)}& v^{(2)}& \cdots & v^{(l)} \\ u^{(1)}& u^{(2)}& \cdots & u^{(l)}\end{pmatrix}\]
    with entries which are tuples of finite words $v^{(j)},\,u^{(j)}\in \prod_{i=1}^m\{0,\dots,k_i-1\}^*$ for all $j=1,\dots,l$ such that both rows describe a partition of \[X = \bigsqcup_{j=1}^l\prod_{i=1}^m u_i^{(j)}X_{k_i} = \bigsqcup_{j=1}^l\prod_{i=1}^m v_i^{(j)}X_{k_i}.\] That is, for every $y\in X$, there is a unique index $j$ and tail $x\in X$ such that $y=v^{(j)}x$. Every table induces a homeomorphism of $X$ by replacing the $v$-initial word tuple with the one of $u$, that is,
    \[ \overline{\begin{pmatrix} v\\ u\end{pmatrix}}\left(v^{(j)}x\right) := u^{(j)}x.
    \]
    The composition of two such homeomorphisms is induced by a refined table of possibly greater size and we call the thereby formed group of induced homeomorphisms the \textit{generalized Brin-Thompson group} $V_{k_1,\dots,k_m}$:
    \[ V_{k_1,\dots,k_m} := \Big\{\overline{\begin{pmatrix} v\\ u\end{pmatrix}}\in \textup{Homeo}(X)\colon \begin{pmatrix} v\\ u\end{pmatrix} \textup{\, is a table}\Big\}.
    \]
\end{df}
The name is motivated by the definition of the \textit{Brin-Thompson group} $mV_k$ in the case $k_1=\dots=k_m =k$. They arise as topological full groups of products of shifts of finite type (SFTs). The argument in the following example applies to all canonical actions of topological full groups associated to minimal effective \'etale groupoids with Cantor unit space, but we limit ourselves to the generalized Brin-Thompson groups for clarity. See also \cite[Definition 6.8, Proposition 7.7]{GarGun_lpemb_2026}.

\begin{eg}\label{sfts}
    The canonical faithful actions $V_{k_1,\dots,k_m}\curvearrowright \prod_{i=1}^m X_{k_i}$ of generalized Brin-Thompson groups are Rubin actions. To verify local density at $p$, without loss of generality, we can assume that the open neighborhood $p\in U$ is a cylinder set $\mu\prod_{i=1}^m X_{k_i}$ for some $\mu\in \prod_{i=1}^m\{0,\dots,k_i-1\}^*$. Then the associated localized subgroup action $(V_{k_1,\dots,k_m})_U\curvearrowright U$ is conjugate to $V_{k_1,\dots,k_m}\curvearrowright \prod_{i=1}^m X_{k_i}$ via
    \[ \iota_U\colon V_{k_1,\dots,k_m} \to (V_{k_1,\dots,k_m})_U,\quad \iota_U(\psi)(y):= \begin{cases}
        \mu\psi(x), \,\, \textup{if}\,\,y= \mu x\in \mu\prod_{i=1}^m X_{k_i};\\ y,\,\, \textup{otherwise.}
    \end{cases}
    \] Since $V_{k_1,\dots,k_m}\curvearrowright \prod_{i=1}^m X_{k_i}$ is minimal, we have $U = \interior{\ov{(V_{k_1,\dots,k_m})_U.p}}$ showing local density.
\end{eg}
The first out of two key steps in the proof of Rubin's theorem is to use the locally moving property to describe localized subgroups of regular open subsets algebraically and independently from the ambient space. To make this precise, we introduce the notion of algebraic disjointness.

\begin{df}\label{alg dj df}
    Let $\Gamma$ be a group, let $ f,g\in \Gamma$ and let $C_\Gamma(g) := \{\gamma\in \Gamma\colon \gamma g = g\gamma\}$ denote the centralizer of $g$. We say that $g$ is \textit{algebraically disjoint} from $f$ if for all $h\in \Gamma\setminus C_\Gamma(f)$ there are $f_1,\,f_2\in C_\Gamma(g)$ such that $[f_1,[f_2,h]]\in C_\Gamma(g)\setminus \{1\}$. We write $g\lhd^\Gamma_{alg} f$.
\end{df}
The relation $\lhd^\Gamma_{alg}$ on $\Gamma$ is neither symmetric, reflexive, nor preserved under group homomorphisms in general. However, despite its bad permanence properties, for locally moving actions, the following proposition justifies viewing the algebraic disjointness relation as a way to describe the lattice of regular localized subgroups in $\Gamma$ independently from $X$.\par
Recall the notion of locally moving in \autoref{Rubin df}. The following is \cite[Proposition 2.1]{BelEllMat_Rubin_2022}.

\begin{prop}\label{alg supp}
    Let $\Gamma \curvearrowright X$ be locally moving and let $f\in \Gamma$. Then $\Gamma_{\textup{rsupp}_X(f)} = C_\Gamma(\{g^{12}\colon g \lhd^\Gamma_{alg} f\})$.
\end{prop}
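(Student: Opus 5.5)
The plan is to prove the two inclusions separately. Write $U := \textup{rsupp}_X(f)$, and note that $U$ is a regular open set.

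\emph{The inclusion $\Gamma_U \subseteq C_\Gamma(\{g^{12}\colon g \lhd^\Gamma_{alg} f\})$.} Fix $g$ with $g\lhd^\Gamma_{alg} f$. The first step is to show that $g^{12}$ acts trivially on $U$, i.e.\ $\supp'_X(g^{12})\cap U=\emptyset$. Suppose not. Then there is a point $x\in U$ with $g^{12}x\neq x$, and hence a small open set $W\subseteq U$ containing $x$ on which $g$ moves points off themselves; shrinking $W$, we may assume $W, gW, \dots, g^{12}W$ are arranged so that $g^{12}W\cap W=\emptyset$.

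Since the action is locally moving, we can choose $h\in \Gamma$ supported in a tiny subset of $W$ and not commuting with $f$. This is possible because $W\subseteq \textup{rsupp}_X(f)$, so $f$ moves points inside $W$: pick a small open set $W'$ with $fW'\cap W'=\emptyset$ and a nontrivial $h\in\Gamma_{W'}$. Then $fhf^{-1}$ is supported in $fW'$, which is disjoint from $W'$, so $h$ does not commute with $f$.

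Algebraic disjointness now gives $f_1,f_2\in C_\Gamma(g)$ with $k:=[f_1,[f_2,h]]\in C_\Gamma(g)\setminus\{1\}$. The element $k$ is supported in $\supp(h)\cup f_2\supp(h)\cup f_1\supp(h)\cup f_1f_2\supp(h)$, so its support meets at most four translates of the small set. The $f_i$ commute with $g$, so they permute $g$-orbits. Since $k$ commutes with $g$, its open support is $g$-invariant, and a nonempty $g$-invariant set must meet many translates $g^jW'$. Choosing $W'$ so small that the translates $g^jW'$, $j=0,\dots,12$, are pairwise disjoint (possible because $g^{12}x\neq x$ means the relevant powers move $x$; this needs care with the small divisors of $12$, which is exactly the reason for the exponent $12$ in \cite{BelEllMat_Rubin_2022}) gives a count contradiction, and hence $k=1$. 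This counting step is the main obstacle. I would follow the combinatorial argument of \cite[Proposition 2.1]{BelEllMat_Rubin_2022}, or simply cite it, since the statement is exactly that proposition.

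Once $g^{12}$ is trivial on $U$, any $\gamma\in\Gamma_U$ commutes with $g^{12}$. Indeed, $\supp'(\gamma)\subseteq U$ and $\supp'(g^{12})\subseteq X\setminus U$ are disjoint open sets, and homeomorphisms with disjoint open supports commute.

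\emph{The inclusion $C_\Gamma(\{g^{12}\colon g \lhd^\Gamma_{alg} f\})\subseteq \Gamma_U$.} Suppose $\gamma\notin \Gamma_U$. Then $\gamma$ moves some point $y\notin \ov{U}$, using regularity of $U$, since $\supp'(\gamma)$ is open and not contained in $U=\interior{\ov U}$. Choose an open set $V\ni y$ with $V\cap \ov U=\emptyset$ and $\gamma V\cap V=\emptyset$.

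We claim every $g\in\Gamma_V$ satisfies $g\lhd^\Gamma_{alg} f$. Take $h\notin C_\Gamma(f)$. Then $h$ moves some point of $\supp(f)$, so $h$ moves part of $U$ to somewhere. Pick $f_2$ supported in a small set $A\subseteq U$ with $hA\cap A=\emptyset$, so that $[f_2,h]$ is nontrivial and supported near $A\cup h^{-1}A$. Then pick $f_1$ supported in a smaller set $B \subseteq A$ so that $[f_1,[f_2,h]]$ is nontrivial and supported inside $A\subseteq U$. Both $f_i$ are supported in $U$, disjoint from $V$, so they commute with $g$; likewise the double commutator is supported in $U$ and lies in $C_\Gamma(g)\setminus\{1\}$.

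Finally, locally moving applied to a set $V'\subseteq V$ with pairwise disjoint translates under a nontrivial element yields $g\in\Gamma_V$ with $g^{12}\neq 1$ and $\supp'(g^{12})\subseteq V$. Then $\gamma g^{12}\gamma^{-1}$ is supported in $\gamma V$, disjoint from $V$, so $\gamma$ does not commute with $g^{12}$. This contradicts $\gamma \in C_\Gamma(\{g^{12}\colon g \lhd^\Gamma_{alg} f\})$, proving the second inclusion.
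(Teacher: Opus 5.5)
The paper gives no argument of its own here. It simply quotes \cite[Proposition 2.1]{BelEllMat_Rubin_2022}, and your outline follows that proof's strategy. Your argument that every $g\in\Gamma_V$ is algebraically disjoint from $f$ is fine. You should add one justification: $h\notin C_\Gamma(f)$ must move a point of $\supp_X(f)=\ov{U}$, since otherwise $hfh^{-1}=f$. The counting step of the first inclusion, however, is misstated. From $g^{12}x\neq x$ you cannot make $g^jW'$, $0\le j\le 12$, pairwise disjoint, because $x$ may have $g$-period $5$, say. What you do get is $g^jx\neq x$ for $j=1,2,3,4$, and that suffices:
\begin{enumerate}
\item Choose $W'$ with $W'\cap g^jW'=\emptyset$ for $j=1,\dots,4$.
\item Since $f_1,f_2\in C_\Gamma(g)$, each of the four sets $A$ among $W',f_1W',f_2W',f_1f_2W'$ also satisfies $A\cap g^jA=\emptyset$ for $j=1,\dots,4$.
\item For $y\in\supp'_X(k)$, the five points $y,gy,\dots,g^4y$ lie in the $g$-invariant set $\supp'_X(k)$, hence in the union of these four sets.
\item So some $g^ay,g^by$ with $0\le a<b\le 4$ lie in the same $A$, giving $g^by\in A\cap g^{b-a}A=\emptyset$.
\end{enumerate}
This is exactly why the exponent is $12=\mathrm{lcm}(1,2,3,4)$.

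The genuine gap is the last step of the reverse inclusion. You need some $g\in\Gamma_V$ with $g^{12}\neq 1$, but local movement only provides nontrivial elements of $\Gamma_V$. As far as your sentence goes, these could all have order dividing $12$. Having a set with disjoint translates under a nontrivial $a$ does not by itself yield a long orbit, since the $a$-orbits may have length $2$. One way to close this is a maximal-orbit argument:
\begin{enumerate}
\item Suppose $g^{12}=1$ for all $g\in\Gamma_V$. Pick $a\in\Gamma_V$ and $p$ whose $a$-orbit has maximal length $d\geq 2$; then $d\mid 12$.
\item Choose an open $W\subseteq V$ around $p$ with $W\cap a^jW=\emptyset$ for $0<j<d$, and set $W_0:=\bigcap_{k=0}^{12/d-1}a^{dk}W$. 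Then $W_0,aW_0,\dots,a^{d-1}W_0$ are pairwise disjoint and $a^dW_0=W_0$.
\item Maximality of $d$ forces $a^d$ to be the identity on $W_0$.
\item For $1\neq b\in\Gamma_{W_0}$, the element $ab\in\Gamma_V$ cycles $W_0\to aW_0\to\cdots\to a^{d-1}W_0\to W_0$ with return map $(ab)^d|_{W_0}=b|_{W_0}$.
\item Hence a point moved by $b$ has an $ab$-orbit of length at least $2d$, contradicting maximality.
\end{enumerate}
With such a $g$, your conclusion goes through, since $\supp'_X(g^{12})\subseteq\supp'_X(g)\subseteq V$ and $\gamma V\cap V=\emptyset$.
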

Given faithful actions $\Gamma \curvearrowright X$ and $\Delta \curvearrowright Y$, we may treat the acting groups as $\Gamma\subseteq\textup{Homeo}(X)$ and $\Delta \subseteq \textup{Homeo}(Y)$. To relate these actions, note that a group homomorphism $\Phi\colon \Gamma \to \Delta$ encodes that $\Delta \curvearrowright Y$ restricts to an action $\Gamma\curvearrowright Y$ and this restriction is a faithful action if and only if $\Phi$ is injective.

\begin{df}\label{lr def}
    Let $\Gamma \curvearrowright X$ and $\Delta \curvearrowright Y$ be actions on topological spaces and let $\Phi\colon \Gamma \to \Delta$ be a group homomorphism. Then $\Phi$ is called \textit{locally regular} with respect to $\Gamma \curvearrowright X$ and $\Delta \curvearrowright Y$ if $\Phi(\Gamma_{\textup{rsupp}_X(\gamma)}) = \Phi(\Gamma)_{\textup{rsupp}_Y(\Phi(\gamma))}$ for all $\gamma\in \Gamma$.
\end{df}

\begin{rem}\label{algebraic picture of lr}
    If both actions are locally moving, then local regularity does not depend on the ambient actions on $X$ or $Y$ by \autoref{alg supp}, and is just an algebraic condition met by isomorphisms, but not necessarily by every injective group homomorphism. In fact, for given $\gamma\in \Gamma$, neither containment of the two subgroups $\Phi(\Gamma_{\textup{rsupp}_X(\gamma)})$ and $\Phi(\Gamma)_{\textup{rsupp}_Y(\Phi(\gamma))}$ is automatic. By \autoref{alg supp} and injectivity of $\Phi$, the former group is $C_{\Phi(\Gamma)}(\{\Phi(\tau)^{12}\colon \tau \lhd^\Gamma_{alg} \gamma\})$, the latter is $C_{\Phi(\Gamma)}(\{\delta^{12}\colon \delta \lhd^\Delta_{alg} \Phi(\gamma)\})$, and neither of the two properties $\tau\lhd_{alg}^\Gamma \gamma$ or $\Phi(\tau)\lhd_{alg}^\Delta \Phi(\gamma)$ is implied by the other. 
\end{rem}
We continue to show that embeddings that admit a spatial map of the desired form are necessarily locally regular. 

\begin{df}\label{anchor for phi df}
    Let $\Gamma\subseteq\textup{Homeo}(X)$ and $\Delta \subseteq \textup{Homeo}(Y)$ be subgroups and let $\Phi\colon \Gamma \to \Delta$ be a group homomorphism. We say that a map $\rho\colon Y\to X$ is \textit{$\Phi$-equivariant} if $\rho\circ \Phi(\gamma) = \gamma \circ \rho$ for all $\gamma\in \Gamma$. We moreover say that a continuous $\Phi$-equivariant $\rho\colon Y\to X$ is an \textit{anchor map} for $\Phi$ if $\rho^{-1}(\textup{rsupp}_X(\gamma)) = \textup{rsupp}_Y(\Phi(\gamma))$ for all $\gamma \in \Gamma$.
\end{df}

\begin{prop}\label{spatial map implies lr}
    Let $\rho$ be an anchor map for $\Phi$. If $\Phi$ is injective and if $\rho$ is surjective, then $\Phi$ is locally regular. Furthermore, if $\Gamma\curvearrowright X$ is Rubin (see \autoref{Rubin df}), then there is at most one anchor map for $\Phi$.
\end{prop}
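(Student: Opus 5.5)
For local regularity, we fix $\gamma\in\Gamma$ and set $U=\textup{rsupp}_X(\gamma)$ and $V=\textup{rsupp}_Y(\Phi(\gamma))=\rho^{-1}(U)$. We then prove the two inclusions $\Phi(\Gamma_U)\subseteq\Phi(\Gamma)_V$ and $\Phi(\Gamma)_V\subseteq\Phi(\Gamma_U)$ separately.

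\emph{First inclusion.} Let $\tau\in\Gamma_U$ and $y\in Y$ with $\Phi(\tau)y\neq y$. If $\rho(y)\notin \supp'_X(\tau)$, we would like a contradiction. Equivariance gives $\rho(\Phi(\tau)y)=\tau\rho(y)$, which alone does not force $\Phi(\tau)y=y$, since $\rho$ need not be injective. So we argue through regular supports instead. Since $\supp'_X(\tau)\subseteq U$, we have $\supp_X(\tau)\subseteq\ov{U}$ and hence $\textup{rsupp}_X(\tau)\subseteq \interior{\ov{U}}=U$, because $U$ is regular open. The anchor property then gives $\textup{rsupp}_Y(\Phi(\tau))=\rho^{-1}(\textup{rsupp}_X(\tau))\subseteq\rho^{-1}(U)=V$. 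Finally $\supp'_Y(\Phi(\tau))$ is open and contained in $\supp_Y(\Phi(\tau))$, so it lies in the interior $\textup{rsupp}_Y(\Phi(\tau))\subseteq V$. Thus $\Phi(\tau)\in\Phi(\Gamma)_V$. This step is purely formal.

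\emph{Second inclusion.} Let $\Phi(\tau)\in\Phi(\Gamma)_V$; by injectivity $\tau$ is uniquely determined, and we must show $\supp'_X(\tau)\subseteq U$. As in the first inclusion, it suffices to show $\textup{rsupp}_X(\tau)\subseteq U$, since $\supp'_X(\tau)\subseteq\textup{rsupp}_X(\tau)$. Running the previous argument on the $Y$ side, $\supp'_Y(\Phi(\tau))\subseteq V$ gives $\textup{rsupp}_Y(\Phi(\tau))\subseteq\interior{\ov V}=V$, because $V$ is a regular support and hence regular open. Applying the anchor property, $\rho^{-1}(\textup{rsupp}_X(\tau))\subseteq\rho^{-1}(U)$. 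Surjectivity of $\rho$ now yields $\textup{rsupp}_X(\tau)=\rho(\rho^{-1}(\textup{rsupp}_X(\tau)))\subseteq\rho(\rho^{-1}(U))=U$. Surjectivity is the only genuinely needed ingredient here, and this is where it is used.

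\emph{Uniqueness.} Let $\rho,\rho'$ be two anchor maps and suppose $\rho(y)=x\neq x'=\rho'(y)$ for some $y\in Y$. We choose disjoint open sets $x\in A$ and $x'\in A'$. Since the action is Rubin, it is locally moving by \autoref{lc moving weaker}, so we can pick $1\neq\gamma\in\Gamma_A$ with $\gamma$ moving a point near $x$. What we actually need is a $\gamma$ whose regular support contains $x$ but is disjoint from $A'$. Using local density, $x\in\interior{(\ov{\Gamma_A x})}$. If every element of $\Gamma_A$ fixed $x$, the orbit $\Gamma_A x$ would be $\{x\}$, making $x$ isolated, which is impossible. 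Hence some $g\in\Gamma_A$ moves $x$, so $x\in\supp'_X(g)\subseteq \textup{rsupp}_X(g)\subseteq\interior{\ov A}$, where we shrink $A$ beforehand so that $\ov A\cap A'=\emptyset$ (possible in a locally compact Hausdorff space). Then $y\in\rho^{-1}(\textup{rsupp}_X(g))=\textup{rsupp}_Y(\Phi(g))=\rho'^{-1}(\textup{rsupp}_X(g))$, so $x'\in\textup{rsupp}_X(g)\subseteq\ov A$, contradicting $x'\in A'$. This step is the main obstacle; the key is to choose $g$ moving $x$ itself, so that membership in the open support, and not merely the regular support, is guaranteed.
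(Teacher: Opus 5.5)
Your proof is correct and follows the paper's argument. The local regularity part is the same chain of equivalences, split into two inclusions; you make the regular-openness of regular supports explicit and correctly place the use of surjectivity in the second inclusion. The uniqueness part also has the same structure: separate $\rho(y)$ from $\rho'(y)$ by a regular support and pull it back along both anchor maps.

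The only difference is in how you obtain the separating regular support. You construct it directly from local density and the absence of isolated points, whereas the paper cites \cite[Proposition 3.2]{BelEllMat_Rubin_2022} (regular supports form a basis). Your step of shrinking $A$ is unnecessary: an open $A'$ disjoint from $A$ is automatically disjoint from $\ov A$.
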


\begin{proof}
    The property $\Phi(\tau)\in \Phi(\Gamma_{\textup{rsupp}_X(\gamma)})$ is equivalent to $\textup{rsupp}_X(\tau)\subseteq \textup{rsupp}_X(\gamma)$ by injectivity of $\Phi$. This is further equivalent to $\rho^{-1}(\textup{rsupp}_X(\tau))\subseteq \rho^{-1}(\textup{rsupp}_X(\gamma))$ by surjectivity of $\rho$ and thus to $\textup{rsupp}_Y(\Phi(\tau))\subseteq \textup{rsupp}_Y(\Phi(\gamma))$ by assumption. This in turn just means that $\Phi(\tau)\in \Phi(\Gamma)_{\textup{rsupp}_Y(\Phi(\gamma))}$. For the second claim, let $\Gamma\curvearrowright X$ be Rubin and assume that there are two anchor maps $\rho_1$ and $\rho_2$ for $\Phi$. In order to reach a contradiction, assume that there exists $y\in Y$ such that $\rho_1(y)\neq \rho_2(y)$. Since regular supports form a basis for the topology on $X$ by \cite[Proposition 3.2]{BelEllMat_Rubin_2022} and since $X$ is Hausdorff, there is a neighborhood $\textup{rsupp}_X(\gamma)$ for some $\gamma\in \Gamma$ containing $\rho_1(y)$, but not $\rho_2(y)$. Hence $y\in \rho_1^{-1}(\textup{rsupp}_X(\gamma)) = \textup{rsupp}_Y(\Phi(\gamma))$ and $y\not\in \rho_2^{-1}(\textup{rsupp}_X(\gamma)) = \textup{rsupp}_Y(\Phi(\gamma))$. This is a contradiction, which shows that $\rho_1=\rho_2$.\qedhere
\end{proof}
The second key step in the proof of Rubin's theorem is to recover the space $X$ from the isomorphism class of $\Gamma$ by expressing points in $X$ algebraically as limits of ultrafilters consisting of regular supports. This information is contained in the poset of localized subgroups, and since these subgroups are preserved under group isomorphisms, this finally allows to construct the desired spatial map. To pursue a similar strategy for sufficiently regular group embeddings, we introduce more notation.

\begin{df}\label{filter df}
    Let $(\mathcal{R},\leq)$ be a poset. A nonempty subset $\mathcal{P}\subseteq \mathcal{R}$ is called a \textit{prefilter} on $\mathcal{R}$ if for all $U,V\in \mathcal{P}$, there is $W\in \mathcal{P}$ with $W\leq U$ and $W\leq V$. By Zorn's lemma, any prefilter $\mathcal{P}$ is contained in a maximal prefilter $\mathcal{F}$, which is called an \textit{ultrafilter}. \par In the special case that $(\mathcal{B},\subseteq)$ is a poset of open sets under inclusion that form a neighborhood basis for some topological space $X$, we say that an ultrafilter $\mathcal{F}\subseteq \mathcal{B}$ \textit{converges} to a point $x\in X$ if every neighborhood of $x$ contains a set from $\mathcal{F}$.
\end{df}

\begin{df}\label{saturated top}
    Let $\Gamma \curvearrowright X$ be an action on a topological space. We say that $\Gamma \curvearrowright X$ is \textit{of full support} if it has no global fixed points. We call the coarser topo\-logy on $X$ generated by 
    \[\Bigg\{ \bigcap_{j=1}^n \textup{supp}'_X(\gamma_j)\subseteq X\colon \gamma_1,\dots,\gamma_n\in \Gamma \andeqn \bigcap_{j=1}^n \textup{supp}'_X(\gamma_j) \neq \emptyset \Bigg\}\] the \textit{saturated topology}. We further refer to open subsets with respect to the saturated topology as \textit{saturated} open subsets and denote closures and interiors with respect to this topology by an indexed \textit{Sat}. For $\gamma\in \Gamma$, the \textit{saturated regular support} is given by $\textup{srs}_X(\gamma) := \interior{\ov{\textup{supp}'_X(\gamma)}^{Sat}}$. Finally, we define \[\mathcal{R}_X := \Bigg\{ \bigcap_{j=1}^n \textup{srs}_X(\gamma_j)\subseteq X\colon \gamma_1,\dots,\gamma_n\in \Gamma \andeqn \bigcap_{j=1}^n \textup{srs}_X(\gamma_j) \neq \emptyset \Bigg\}\] and write $\Gamma \curvearrowright (\mathcal{R}_X,\subseteq)$ to refer to the induced pointwise action, which is order-preserving on the corresponding poset under inclusion.
\end{df}

\begin{rem}\label{saturated automatic for Rubin}
    If $X$ is locally compact and if $\Gamma \curvearrowright X$ is of full support, then $X$ remains locally compact in the saturated topology and in this case $\mathcal{R}_X$ forms a basis for the saturated topology on $X$. If $\Gamma \curvearrowright X$ is Rubin, then all open subsets are saturated by \cite[Proposition 3.2]{BelEllMat_Rubin_2022} and $\textup{srs}_X(\gamma) = \textup{rsupp}_X(\gamma)$ for all $\gamma \in \Gamma$.
\end{rem}
We will consider convergent ultrafilters on $(\mathcal{R}_X,\subseteq)$ with respect to the saturated topology. Thus, for the saturated topology to be well-defined, we have to assume the full support condition. By \cite[Proposition 3.3. (3)]{BelEllMat_Rubin_2022}, an ultrafilter $\mathcal{F}\subseteq \mathcal{R}_X$ converges to $x$ in the saturated topology if and only if $\mathcal{F}$ contains all saturated neighborhoods of $x$.\par In the next proposition, we highlight a weaker form of the locally moving property that allows us to express convergence of ultrafilters on $\mathcal{R}_X$ in terms of localized subgroups.

\begin{prop}\label{order embedding}
    Let $\Gamma$ be a group and let $(\mathcal{S}(\Gamma),\subseteq)$ be the poset of all subgroups ordered by inclusion. Let $\Gamma \curvearrowright (\mathcal{S}(\Gamma),\subseteq)$ denote the order-preserving action by conjugating subgroups. Let $\Gamma \curvearrowright X$ be an action of full support such that the localized subgroup $\Gamma_W$ (see \autoref{localized subgroups}) is nontrivial for all nonempty saturated open subsets of the form $W=\textup{srs}_X(\gamma)\setminus \overline{\textup{srs}_X(\tau)}^{Sat}$ for some $\gamma,\tau\in \Gamma$. Then taking localized subgroups \[\Gamma_{(-)}\colon (\mathcal{R}_X,\subseteq)\to (\mathcal{S}(\Gamma),\subseteq)\] is a $\Gamma$-equivariant order embedding.
\end{prop}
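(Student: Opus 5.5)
We need to show three things: the map $U\mapsto \Gamma_U$ preserves order, it reflects order (so it is an order embedding), and it is $\Gamma$-equivariant. Monotonicity is immediate from \autoref{localized subgroups}: if $U\subseteq V$ and $\supp'_X(\gamma)\subseteq U$, then $\supp'_X(\gamma)\subseteq V$.

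For equivariance, we first check that $\mathcal{R}_X$ is $\Gamma$-invariant. For $\eta\in\Gamma$ we have $\eta\,\supp'_X(\gamma) = \supp'_X(\eta\gamma\eta^{-1})$. Hence $\eta$ permutes the subbasic sets of the saturated topology, so it is a homeomorphism of $X$ for that topology. It follows that $\eta$ commutes with $\overline{(\cdot)}^{Sat}$ and $\interior{(\cdot)}$ taken in the saturated topology, and $\eta\,\textup{srs}_X(\gamma)=\textup{srs}_X(\eta\gamma\eta^{-1})$. Therefore $\eta U\in\mathcal{R}_X$ whenever $U\in\mathcal{R}_X$. The same identity gives $\Gamma_{\eta U} = \eta\Gamma_U\eta^{-1}$: $\supp'_X(\gamma)\subseteq U$ if and only if $\supp'_X(\eta\gamma\eta^{-1})\subseteq \eta U$.

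The main step is order reflection. Suppose $\Gamma_U\subseteq\Gamma_V$ for $U,V\in\mathcal{R}_X$; we want $U\subseteq V$. Write $U=\bigcap_{i}\textup{srs}_X(\gamma_i)$ and $V=\bigcap_{j=1}^n\textup{srs}_X(\tau_j)$, and assume $U\not\subseteq V$. Then $U\not\subseteq\textup{srs}_X(\tau_j)$ for some $j$. Since $\textup{srs}_X(\tau_j)$ is the saturated interior of $\overline{\supp'_X(\tau_j)}^{Sat}$ and $U$ is saturated open, we get $U\not\subseteq \overline{\textup{srs}_X(\tau_j)}^{Sat}$: otherwise $U$ would lie in the interior of that closure, which is contained in the interior of $\overline{\supp'}^{Sat}$, namely $\textup{srs}_X(\tau_j)$. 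So the set $U\setminus\overline{\textup{srs}_X(\tau_j)}^{Sat}$ is a nonempty saturated open set.

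This set is not yet of the required form $\textup{srs}_X(\gamma)\setminus\overline{\textup{srs}_X(\tau)}^{Sat}$, because $U$ is an intersection of several srs's. Shrinking it to that form is the hardest step. The first attempt is to find a single $\gamma$ with $\textup{srs}_X(\gamma)\subseteq U$ and $\textup{srs}_X(\gamma)\not\subseteq\overline{\textup{srs}_X(\tau_j)}^{Sat}$. Two routes to such a $\gamma$:
\begin{itemize}
\item Apply the hypothesis inductively. For $n=1$ it directly gives a nontrivial $\gamma_0$ supported in the difference set. For an intersection, note that $\supp'_X(\gamma_0)$ is contained in the set, so $\textup{srs}_X(\gamma_0)$ is a smaller regular set inside the saturated closure, and we can iterate, replacing $\textup{srs}_X(\gamma_1)\cap\textup{srs}_X(\gamma_2)$ by some $\textup{srs}_X(\gamma')$ sitting inside it.
\item Alternatively, use that the sets $\textup{srs}_X(\gamma)\setminus\overline{\textup{srs}_X(\tau)}^{Sat}$ with $\tau$ ranging over products of the $\gamma_i$ generate enough.
\end{itemize}

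Once we have $W=\textup{srs}_X(\gamma)\setminus\overline{\textup{srs}_X(\tau_j)}^{Sat}\neq\emptyset$ with $W\subseteq U$, the hypothesis gives $1\neq\delta\in\Gamma_W\subseteq\Gamma_U\subseteq\Gamma_V$. Then $\supp'_X(\delta)$ is a nonempty saturated open set contained both in $V\subseteq\textup{srs}_X(\tau_j)$ and in $W$, which is disjoint from $\overline{\textup{srs}_X(\tau_j)}^{Sat}$. This is a contradiction, so $U\subseteq V$.

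Finally, injectivity of $\Gamma_{(-)}$ follows from order reflection applied in both directions.
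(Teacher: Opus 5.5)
Monotonicity, equivariance (via $\eta\,\supp'_X(\gamma)=\supp'_X(\eta\gamma\eta^{-1})$), the reduction on the $V$ side to a single $\textup{srs}_X(\tau_j)$, and the step from $U\not\subseteq\textup{srs}_X(\tau_j)$ to $U\not\subseteq\ov{\textup{srs}_X(\tau_j)}^{Sat}$ are all correct. When $U$ is a single $\textup{srs}_X(\gamma)$, your argument is exactly the paper's. The gap is the step you flag as hardest, and neither of your routes closes it. For $U=\bigcap_i\textup{srs}_X(\gamma_i)$ you need some $\delta\neq 1$ with $\supp'_X(\delta)\subseteq U\setminus\ov{\textup{srs}_X(\tau_j)}^{Sat}$, and this set is not of the form the hypothesis covers.

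Route one is circular. Applying the hypothesis to $\textup{srs}_X(\gamma_1)\setminus\ov{\textup{srs}_X(\tau_j)}^{Sat}$ gives some $\gamma_0\neq 1$ supported there. But nothing forces $\supp'_X(\gamma_0)$ to meet $\textup{srs}_X(\gamma_2)$, let alone lie inside it. Moreover, producing a $\gamma'$ with $\emptyset\neq\textup{srs}_X(\gamma')\subseteq\textup{srs}_X(\gamma_1)\cap\textup{srs}_X(\gamma_2)$ is the same as producing a nontrivial element of the localized subgroup of that intersection, which is what you set out to prove.

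Route two is not yet an argument. Its natural instance $\tau=\gamma_1\gamma_2$ does give $\textup{srs}_X(\gamma_1)\setminus\ov{\textup{srs}_X(\gamma_1\gamma_2)}^{Sat}\subseteq\textup{srs}_X(\gamma_1)\cap\textup{srs}_X(\gamma_2)$. But that set is empty whenever $\supp'_X(\gamma_1\gamma_2)$ is $Sat$-dense in $\textup{srs}_X(\gamma_1)$, which nothing excludes. Even when it is nonempty, it need not avoid $\ov{\textup{srs}_X(\tau_j)}^{Sat}$.

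As written, the paper does not supply this step either. Its proof opens with ``since taking localized subgroups commutes with taking intersections, it suffices to consider $U=\textup{srs}_X(\gamma)$ and $V=\textup{srs}_X(\tau)$''. The identity $\Gamma_{\bigcap_j V_j}=\bigcap_j\Gamma_{V_j}$ does reduce the $V$ side, which is what your choice of $j$ does. It gives nothing for $U$, since $\bigcap_i\Gamma_{U_i}\subseteq\Gamma_V$ is not a statement about any single $U_i$. The subsequent claim that $U\setminus\ov{V}^{Sat}$ is ``of the required form'' holds only for single supports.

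What closes the gap is the stronger input that $\Gamma_W\neq\{1\}$ for every nonempty saturated open $W$. With it, you can apply nontriviality directly to $W=U\setminus\ov{\textup{srs}_X(\tau_j)}^{Sat}$, and the rest of your argument goes through unchanged. This stronger input holds in both places where the paper invokes the proposition:
for the Rubin action $\Gamma\curvearrowright X$, by the locally moving property, since all open sets are saturated;
for $\Phi(\Gamma)\curvearrowright Y$, because local density on saturated subsets places a nonempty $\textup{srs}_Y(\Phi(\tau))\supseteq\supp'_Y(\Phi(\tau))$ inside any nonempty saturated open set.
Under the hypothesis exactly as stated, however, your proposal does not establish order reflection on all of $\mathcal{R}_X$.
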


\begin{proof}
     Given $U,V\in \mathcal{R}_X$, we first claim that $U \subseteq V$ if and only if $\Gamma_U \subseteq \Gamma_V$. For its verification, we follow the proof of \cite[Proposition 3.1]{BelEllMat_Rubin_2022}. Since taking localized subgroups commutes with taking intersections, it suffices to consider $U= \textup{srs}_X(\gamma)$ and $V=\textup{srs}_X(\tau)$ for some $\gamma,\tau\in \Gamma$. By definition $U \subseteq V$ implies $\Gamma_U \subseteq \Gamma_V$. Conversely, if $U \not\subseteq V$, then $U \not\subseteq \overline{V}^{Sat}$ by saturated regularity. Hence $W:=U\setminus \overline{V}^{Sat}$ is a nonempty open set of the required form and by assumption we can choose a $\gamma_0\in \Gamma_W\setminus\{1\}$ with $\textup{srs}_X(\gamma_0)\subseteq U\setminus V$, that is, $\gamma_0\in \Gamma_{U}\setminus \Gamma_{V}$ and this shows the claim.\par Finally note that for any $U\in \mathcal{R}_X$ and $\gamma,\tau\in \Gamma$, we have $\tau\in \Gamma_U$ if and only if $\textup{srs}_X(\gamma\tau\gamma^{-1}) =\gamma \cdot \textup{srs}_X(\tau)\subseteq \gamma \cdot U$ showing $\gamma\Gamma_U\gamma^{-1} = \Gamma_{\gamma \cdot U}$. 
\end{proof}
For a Rubin action $\Gamma\curvearrowright X$ and a faithful action $\Delta \curvearrowright Y$, we aim to formalize what it means for an embedding $\Phi\colon \Gamma\to \Delta$ to preserve the localized subgroup structure well enough to admit a unique anchor map in the sense of \autoref{anchor for phi df}. The necessary tools to relate ultrafilter limits in $Y$ and $X$ spatially are that $\Phi$ is locally regular and that $\Phi(\Gamma)\curvearrowright Y$ meets the requirements of \autoref{order embedding}.\par Note, however, that by Rubin's theorem the action $\Phi(\Gamma)\curvearrowright Y$ is Rubin if and only if the anchor map is a homeomorphism. To be able to construct anchor maps that are not necessarily homeomorphisms, we require $\Phi(\Gamma)\curvearrowright Y$ to be locally dense on saturated open sets only.

\begin{df}\label{Rubin emb df}
    Let $\Gamma \curvearrowright X$ be a Rubin action, let $Y$ be a locally compact Hausdorff space with no isolated points, and let $\Delta \subseteq\textup{Homeo}(Y)$ be a subgroup. Let $\Phi\colon \Gamma \rightarrow \Delta$ be an injective group homomorphism. Then we say that $\Phi$ is
    \begin{itemize}
        \item of \textit{full support} if $\Phi(\Gamma)\curvearrowright Y$ has no global fixed points, and
        \item \textit{locally dense on saturated subsets} if for all $y\in Y$ and every saturated open neighborhood $V$ of $y$, there exists $\tau\in \Gamma$ such that $y\in \textup{srs}_Y(\Phi(\tau))\subseteq V$.
    \end{itemize}
    We call $\Phi$ a \textit{Rubin embedding} if it is locally regular, of full support, and locally dense on saturated subsets.
\end{df}

\begin{rem}\label{Rub emb rem}
    It follows directly from \autoref{Rubin df} that the identity homomorphism $\Phi = \textup{id}_\Gamma$ is a Rubin embedding for any Rubin action $\Gamma \curvearrowright X$. The full support condition is of technical nature and is in this context equivalent to the existence of saturated neighborhoods. Note that the restricted action $\Phi(\Gamma) \curvearrowright Y$ does not have to be Rubin in general since not all open sets have to be saturated.
\end{rem}
The notion of a Rubin embedding is designed to make the following construction work.

\begin{prop}\label{P map}
    Let $\Gamma \curvearrowright X$ be a Rubin action, let $\Delta\subseteq\textup{Homeo}(Y)$ be a subgroup and let $\Phi\colon \Gamma \rightarrow \Delta$ be a Rubin embedding. We consider $\mathcal{R}_Y$ for the action $\Phi(\Gamma) \curvearrowright Y$ as in \autoref{saturated top}. Then, for any $V= \bigcap_{j} \textup{srs}_Y(\Phi(\gamma_j))\in \mathcal{R}_Y$, the set $P(V):= \bigcap_j \textup{srs}_X(\gamma_j)\in \mathcal{R}_X$ is well-defined and the resulting map $P\colon \mathcal{R}_Y \to \mathcal{R}_X$ is a $\Phi$-equivariant order isomorphism.
\end{prop}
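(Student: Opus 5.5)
The plan is to route everything through localized subgroups, using \autoref{order embedding} on both sides together with local regularity of $\Phi$.

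\textbf{Hypotheses of \autoref{order embedding}.} On the $X$ side, $\Gamma\curvearrowright X$ is Rubin. So every open set is saturated and $\textup{srs}_X=\textup{rsupp}_X$ by \autoref{saturated automatic for Rubin}. The action is also locally moving by \autoref{lc moving weaker}, so $\Gamma_W\neq\{1\}$ for every nonempty open $W$, and it has full support. Hence $\Gamma_{(-)}\colon\mathcal{R}_X\to\mathcal{S}(\Gamma)$ is a $\Gamma$-equivariant order embedding.

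On the $Y$ side, $\Phi(\Gamma)\curvearrowright Y$ has full support by assumption. For a nonempty saturated open $W=\textup{srs}_Y(\Phi(\gamma))\setminus\ov{\textup{srs}_Y(\Phi(\tau))}^{Sat}$, pick $y\in W$. Local density on saturated subsets gives $\sigma\in\Gamma$ with $y\in\textup{srs}_Y(\Phi(\sigma))\subseteq W$. Then $\Phi(\sigma)\neq 1$, and $\supp'_Y(\Phi(\sigma))\subseteq\textup{srs}_Y(\Phi(\sigma))$: a point of $\supp'$ has a saturated neighborhood inside $\supp'$, hence lies in the Sat-interior of the Sat-closure. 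So $\Phi(\sigma)\in\Phi(\Gamma)_W\setminus\{1\}$, and $\Phi(\Gamma)_{(-)}\colon\mathcal{R}_Y\to\mathcal{S}(\Phi(\Gamma))$ is also an order embedding.

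\textbf{The key identity.} I would show that for finitely many $\gamma_j$,
\[\Phi(\Gamma)_{\bigcap_j\textup{srs}_Y(\Phi(\gamma_j))}=\Phi\Big(\Gamma_{\bigcap_j\textup{rsupp}_X(\gamma_j)}\Big).\]
Localized subgroups commute with intersections. For a single $\gamma$ the identity reads $\Phi(\Gamma)_{\textup{srs}_Y(\Phi(\gamma))}=\Phi(\Gamma_{\textup{rsupp}_X(\gamma)})$, but local regularity is phrased with $\textup{rsupp}_Y$, not $\textup{srs}_Y$. So I need $\Phi(\Gamma)_{\textup{srs}_Y(\Phi(\gamma))}=\Phi(\Gamma)_{\textup{rsupp}_Y(\Phi(\gamma))}$. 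I expect this comparison of the two kinds of regular support to be the main technical point.

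For $\delta\in\Phi(\Gamma)$, both containments $\supp'_Y(\delta)\subseteq\textup{srs}_Y(\Phi(\gamma))$ and $\supp'_Y(\delta)\subseteq\textup{rsupp}_Y(\Phi(\gamma))$ should be equivalent to $\supp'_Y(\delta)\subseteq\ov{\supp'_Y(\Phi(\gamma))}$ (ordinary closure).
\begin{itemize}
\item For $\textup{rsupp}$ this holds because $\supp'_Y(\delta)$ is open.
\item For $\textup{srs}$, note that $\supp'_Y(\delta)$ is saturated open. One then checks that a saturated open set lies in the Sat-closure of $\supp'_Y(\Phi(\gamma))$ if and only if it lies in the ordinary closure: saturated open sets are open, and every Sat-neighborhood is a neighborhood.
\end{itemize}
If the ordinary and saturated closures of $\supp'$ do not interact this cleanly, I would instead rely on the local density condition to compare them.

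\textbf{Well-definedness and the order isomorphism.} Suppose $\bigcap_j\textup{srs}_Y(\Phi(\gamma_j))=\bigcap_k\textup{srs}_Y(\Phi(\tau_k))$. By the key identity and injectivity of $\Phi$, the localized subgroups of $\bigcap_j\textup{rsupp}_X(\gamma_j)$ and $\bigcap_k\textup{rsupp}_X(\tau_k)$ agree. Since $\Gamma_{(-)}$ is an order embedding on $\mathcal{R}_X$, the two sets are equal.

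Nonemptiness of $P(V)$ also follows from the identity. A nonempty $V$ has $\Phi(\Gamma)_V\neq\{1\}$, by the nontriviality established above applied with $\tau=1$. Hence $\Gamma_{P(V)}\neq\{1\}$, which forces $P(V)\neq\emptyset$, so $P(V)\in\mathcal{R}_X$.

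The same computation run in both directions shows $V\subseteq V'\iff P(V)\subseteq P(V')$. The map $P$ is surjective since any $\bigcap_j\textup{rsupp}_X(\gamma_j)\in\mathcal{R}_X$ equals $P\big(\bigcap_j\textup{srs}_Y(\Phi(\gamma_j))\big)$. For this preimage to be a legitimate element of $\mathcal{R}_Y$ it must be nonempty, and this again follows from the identity together with nontriviality of $\Gamma_{U}$ for $U\neq\emptyset$.

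Finally, $\Phi$-equivariance follows from $\textup{srs}_Y(\Phi(\gamma)\Phi(\tau)\Phi(\gamma)^{-1})=\Phi(\gamma)\cdot\textup{srs}_Y(\Phi(\tau))$ and the analogous identity for $\textup{rsupp}_X$. These give $P(\Phi(\gamma)\cdot V)=\gamma\cdot P(V)$.
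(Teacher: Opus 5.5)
\textbf{There is a genuine gap, in the step you yourself flag as the main technical point.} Your route matches the paper's: order embeddings on both sides, then transport of localized subgroups via local regularity. You are also right that something must be said here. Local regularity is stated with $\textup{rsupp}_Y$, while $\mathcal{R}_Y$ is built from $\textup{srs}_Y$, and the paper's proof covers this only by citing local regularity.

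Your argument for $\Phi(\Gamma)_{\textup{srs}_Y(\Phi(\gamma))}=\Phi(\Gamma)_{\textup{rsupp}_Y(\Phi(\gamma))}$ does not work. Your reasons (saturated open sets are open, Sat-neighbourhoods are neighbourhoods) only prove $O\subseteq\ov{S}\Rightarrow O\subseteq\ov{S}^{Sat}$. The converse is not a topological fact, since the coarser topology has larger closures, and it fails for actual actions.

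Here is a counterexample. Let $V_2$ act on $Y=X_2\sqcup(V_2\cdot 0^\infty\times K)$, with the orbit discrete, $K$ a Cantor set, and the action trivial on $K$. The saturated open sets are exactly the preimages of open sets under the projection $Y\to X_2$. Take $\gamma$ with $\supp'_{X_2}(\gamma)=X_2\setminus\{0^\infty,1^\infty\}$. Then $\ov{\supp'_Y(\gamma)}^{Sat}=Y$, but the open set $\{0^\infty\}\times K$ misses $\supp'_Y(\gamma)$. So any $\tau$ moving $0^\infty$ violates your equivalence. This action has full support and is locally dense on saturated subsets; only local regularity fails. Hence the missing step must use local regularity, and your fallback ``rely on local density'' does not supply the argument.

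\textbf{One way to close the gap.} It suffices to show that $\supp'_Y(\Phi(h))\subseteq\textup{srs}_Y(\Phi(g))$ implies $\supp'_X(h)\subseteq\textup{rsupp}_X(g)$. The other direction is your easy inclusion plus local regularity.

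Suppose the implication fails. By local moving, pick $\sigma\neq 1$ with $\supp'_X(\sigma)\subseteq\supp'_X(h)\setminus\supp_X(g)$; then $\textup{rsupp}_X(\sigma)\cap\textup{rsupp}_X(g)=\emptyset$.

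Since $\sigma\in\Gamma_{\textup{rsupp}_X(h)}$, local regularity gives
$\supp'_Y(\Phi(\sigma))\subseteq\textup{rsupp}_Y(\Phi(h))\subseteq\ov{\supp'_Y(\Phi(h))}^{Sat}\subseteq\ov{\supp'_Y(\Phi(g))}^{Sat}$.
So the saturated open set $B=\supp'_Y(\Phi(\sigma))\cap\supp'_Y(\Phi(g))$ is nonempty, because a nonempty saturated open subset of $\ov{S}^{Sat}$ meets $S$.

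Local density on saturated subsets then gives $\tau$ with $\emptyset\neq\textup{srs}_Y(\Phi(\tau))\subseteq B$. Hence $\Phi(\tau)\neq 1$ and $\supp'_Y(\Phi(\tau))\subseteq B$. Applying local regularity to $\sigma$ and to $g$, together with injectivity, yields $\supp'_X(\tau)\subseteq\textup{rsupp}_X(\sigma)\cap\textup{rsupp}_X(g)=\emptyset$. This contradicts $\tau\neq1$.

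This proves $\Phi(\Gamma)_{\textup{srs}_Y(\Phi(g))}=\Phi(\Gamma_{\textup{rsupp}_X(g)})$, which is your key identity. With it, the rest of your argument (well-definedness, nonemptiness of $P(V)$, surjectivity, equivariance) goes through as written.
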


\begin{proof}
    We claim that the actions $\Gamma \curvearrowright X$ and $\Phi(\Gamma)\curvearrowright Y$ both meet the requirements of \autoref{order embedding}. For $\Gamma \curvearrowright X$, this follows from the locally moving property. For $\Phi(\Gamma)\curvearrowright Y$, let $\gamma,\tau\in \Gamma$ be such that $W= \textup{srs}_Y(\Phi(\gamma))\setminus \overline{\textup{srs}_Y(\Phi(\tau))}^{Sat}$ is nonempty. Since $\textup{srs}_Y(\Phi(\gamma))$ is saturated open and $\overline{\textup{srs}_Y(\Phi(\tau))}^{Sat}$ is saturated closed, $W$ is a saturated open set as well. Consequently, local density on saturated subsets applies to any $y\in W$ and we have that $\Phi(\Gamma)_W\neq \{1\}$ since $y$ is not isolated. This shows the claim that both $\Gamma_{(-)}$ and $\Phi(\Gamma)_{(-)}$ are equivariant order embeddings by \autoref{order embedding}.\par From here, by local regularity of $\Phi$, taking image subgroups $\Phi_*\colon \mathcal{S}(\Gamma)\to \mathcal{S}(\Phi(\Gamma))$ restricts to a $\Phi$-equivariant order isomorphism between the respective posets of localized subgroups. Thus $P = \Gamma_{(-)}^{-1}\circ \Phi_*^{-1}\circ \Phi(\Gamma)_{(-)}$ is a well-defined $\Phi$-equivariant order isomorphism, as claimed.
\end{proof}
For any $y\in Y$, there exist ultrafilters on $\mathcal{R}_Y$ that converge to $y$ in the saturated topology, since local compactness of $Y$ and full support of $\Phi$ imply that $y$ admits a saturated neighborhood with compact closure and thus \cite[Proposition 3.4. (2)]{BelEllMat_Rubin_2022} applies. In the following proposition, we use the $P$-map in \autoref{P map} for the desired anchor map construction and define the corresponding point $\rho(y)\in X$.

\begin{prop}\label{rho y construction}
    Let $\Gamma \curvearrowright X$ be a Rubin action, let $\Delta\subseteq\textup{Homeo}(Y)$ be a subgroup and let $\Phi\colon \Gamma \rightarrow \Delta$ be a Rubin embedding. Let $y\in Y$ and let $\mathcal{F}\subseteq\mathcal{R}_Y$ be an ultrafilter that converges to $y$ in the saturated topology. Then the ultrafilter $P(\mathcal{F})\subseteq \mathcal{R}_X$ converges to a unique point $\lim(P(\mathcal{F}))\in X$. Moreover, for any $U\in \mathcal{R}_Y$, we have $\lim(P(\mathcal{F}))\in P(U)$ if and only if $y\in U$.\par In particular, the limit point does only depend on $y$ and not on the choice of $\mathcal{F}$, and the assignment $\rho(y):=\lim(P(\mathcal{F}))\in X$ is well-defined. 
\end{prop}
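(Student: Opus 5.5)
Since $P\colon \mathcal{R}_Y\to\mathcal{R}_X$ is an order isomorphism by \autoref{P map}, it maps ultrafilters on $\mathcal{R}_Y$ bijectively to ultrafilters on $\mathcal{R}_X$: prefilters are defined purely order-theoretically, so $P(\mathcal{F})$ is a prefilter, and maximality transfers through $P^{-1}$. The plan is therefore to prove convergence of $P(\mathcal{F})$ in $X$ and then identify its limit through the membership criterion.

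\textbf{Step 1 (convergence).} I would use the characterization from \cite[Proposition 3.4]{BelEllMat_Rubin_2022} for the Rubin action $\Gamma\curvearrowright X$: an ultrafilter on $\mathcal{R}_X$ converges as soon as it contains a set with compact closure. Since $Y$ is locally compact and $\Phi$ has full support, $y$ has a saturated neighborhood with compact saturated closure. By local density on saturated subsets there is $\tau\in\Gamma$ with $y\in\textup{srs}_Y(\Phi(\tau))$ contained in that neighborhood, and by convergence $\textup{srs}_Y(\Phi(\tau))\in\mathcal{F}$. Hence $P(\mathcal{F})$ contains $\textup{rsupp}_X(\tau)$.

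The main obstacle is that $\textup{rsupp}_X(\tau)$ need not have compact closure. I would instead use the form of the criterion that for Rubin actions (Hausdorff, locally compact) the convergent ultrafilters are exactly those containing a relatively compact member. To obtain one, I would choose $\tau$ so that $\textup{rsupp}_X(\tau)$ is relatively compact. The idea is to pick a relatively compact regular support $R=\textup{rsupp}_X(\sigma)$ in $X$ and show that $P^{-1}(R)=\textup{srs}_Y(\Phi(\sigma))$ together with its $\Gamma$-translates covers $Y$, arguing via ultrafilter maximality. Failing that, I would argue directly: if $P(\mathcal{F})$ did not converge, then the maximal ultrafilter would escape every compact set, and pulling back via $P$ would yield an element disjoint from a member of $\mathcal{F}$, which is a contradiction. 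Uniqueness of the limit is immediate from Hausdorffness of $X$.

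\textbf{Step 2 (membership).} Put $x=\lim P(\mathcal{F})$ and let $U\in\mathcal{R}_Y$. If $y\in U$, then $U\in\mathcal{F}$ by \cite[Proposition 3.3]{BelEllMat_Rubin_2022}, so $P(U)\in P(\mathcal{F})$. For $x\in P(U)$ I would use that an ultrafilter converging to $x$ has all its members' closures containing $x$, and that elements meet pairwise. By maximality, if $x\notin P(U)$, then $x\notin\overline{P(U)}$ or $x\in\partial P(U)$. In either case one finds $W\in P(\mathcal{F})$ with $W\cap P(U)$ empty or ``regularly disjoint'', using regularity of $P(U)$ and that regular supports form a basis, and this contradicts the prefilter property. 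Conversely, if $x\in P(U)$, then $P(U)\in P(\mathcal{F})$ because the ultrafilter contains all neighborhoods of its limit, so $U\in\mathcal{F}$. If $y\notin U$, then $y\notin U=\textup{int}^{Sat}\overline U^{Sat}$, so some basic saturated neighborhood $V\in\mathcal{F}$ of $y$ satisfies $V\cap U'=\emptyset$ for some $U'\subseteq U$ in $\mathcal{F}$. This contradicts the fact that elements of $\mathcal{F}$ have common lower bounds.

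\textbf{Step 3 (independence).} If $\mathcal{F},\mathcal{F}'$ both converge to $y$, then their limits lie in exactly the same sets $P(U)$. Since $P$ is surjective onto $\mathcal{R}_X$, which is a basis of $X$, and $X$ is Hausdorff, the limits coincide, so $\rho$ is well-defined.
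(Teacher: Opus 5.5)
\textbf{Verdict.} Your proposal has a genuine gap in Step 2, and your Step 1 does not close the compactness issue you raise.

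\textbf{The gap in Step 2.} Both directions rely on the claim that a member of an ultrafilter converging to a point contains that point. You use $P(U)\in P(\mathcal{F})\Rightarrow x\in P(U)$ in one direction and $U\in\mathcal{F}\Rightarrow y\in U$ in the other. This fails whenever the limit lies on the boundary of the member. Already for the Rubin action $T\curvearrowright\T$ with $\Phi=\textup{id}$ (so $P=\textup{id}$), any ultrafilter on $\mathcal{R}_{\T}$ containing all dyadic arcs $(p,p+2^{-n})$ converges to $p$ and contains $U=(p,p+\tfrac12)$, yet $p\notin U$. Every neighbourhood of $p$ meets $U$, so in the case $x\in\partial P(U)$ the member of $P(\mathcal{F})$ that you want to be (regularly) disjoint from $P(U)$ does not exist. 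Your argument for $y\notin U$ collapses in the same way when $y\in\ov{U}^{Sat}\setminus U$. The underlying problem is that you only use that $P$ is an order isomorphism, and that alone cannot decide on which side of a boundary a point lies. This is why Rubin's method needs the group action.

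\textbf{The paper's route.} The paper uses the orbit criterion \cite[Proposition 3.5 (2)]{BelEllMat_Rubin_2022}: $\lim P(\mathcal{F})\in P(U)$ if and only if some $V$ with $P(V)\subseteq P(U)$ satisfies $(\mathcal{R}_X)_{\subseteq P(V)}\subseteq\Gamma_{P(U)}\cdot P(\mathcal{F})$. It then proves the analogous criterion on $Y$, with $\Phi(\Gamma)_U\cdot\mathcal{F}$ in place of $\Gamma_{P(U)}\cdot P(\mathcal{F})$. Finally it transports this through the identity $P(\Phi(\Gamma)_U\cdot\mathcal{F})=\Gamma_{P(U)}\cdot P(\mathcal{F})$, i.e.\ through the $\Phi$-equivariance of $P$, which your proposal never uses.

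\textbf{Step 1 and the statement itself.} Your unease in Step 1 is justified, and it points to a problem with the statement as given, not just with your write-up. The paper obtains convergence by asserting that some $\gamma$ has $y\in\textup{srs}_Y(\Phi(\gamma))$ with $\supp_X(\gamma)$ compact. Its $Y$-side orbit criterion also needs that, for every $W\in(\mathcal{R}_Y)_{\subseteq V}$, some element of $\Phi(\Gamma)_U$ moves $y$ into $W$. Neither follows from local density on saturated subsets as defined, which only says that the sets $\textup{srs}_Y(\Phi(\tau))$ form a neighbourhood base.

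For example, take $F\curvearrowright(0,1)$, $Y=(0,1)\sqcup\mathbb{R}$ and $\Phi(\gamma)=\gamma\sqcup\bigl(t\mapsto t+\log_2\gamma'(0^+)\bigr)$.
\begin{itemize}
\item Local regularity, full support and the stated local density condition all hold; in particular every saturated neighbourhood of $t\in\mathbb{R}$ contains some $\textup{srs}_Y(\Phi(\tau))=(0,\delta)\sqcup\mathbb{R}$ with $\supp'_X(\tau)=(0,\delta)$.
\item Yet for $\mathcal{F}$ converging to $t\in\mathbb{R}$, the ultrafilter $P(\mathcal{F})$ contains every $(0,\delta)$ and so converges nowhere in $(0,1)$.
\item Indeed no $\Phi$-equivariant map $Y\to X$ exists: elements trivial near $0$ fix $\mathbb{R}$ pointwise but have no common fixed point in $(0,1)$.
\end{itemize}

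The analogous construction with $T\curvearrowright\T$ shows the problem is not only about compactness. Take $Y=\T\sqcup(D\times\mathbb{R})$, where $D$ is the discrete set of dyadic points and $\gamma\cdot(p,t)=(\gamma p,t+\log_2\gamma'(p^+))$. Here $X$ is compact, but the membership equivalence fails. For $y=(p,t)$ one gets $\lim P(\mathcal{F})=p$. On the other hand, for $\supp'_{\T}(\tau)=(p,p+\delta)$ we have $y\in\textup{srs}_Y(\Phi(\tau))$ and $P(\textup{srs}_Y(\Phi(\tau)))=(p,p+\delta)\not\ni p$.

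So neither your route nor the paper's can be completed from the stated hypotheses. A correct version needs extra assumptions. One is saturated density of the orbits $\Phi(\Gamma)_U\cdot y$ near $y$, which is what the paper's argument tacitly uses. The other is a condition guaranteeing convergence, which is automatic when $X$ is compact.
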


\begin{proof}
    Since $P$ is an order isomorphism by \autoref{P map}, $P(\mathcal{F})$ is an ultrafilter on $\mathcal{R}_X$. Since $\Phi$ is of full support and $X$ and $Y$ are locally compact, there exists $\gamma\in \Gamma$ such that $y\in \textup{srs}_Y(\Phi(\gamma))$ and $\supp_X(\gamma)\subseteq X$ is compact. As a result, we have that $\textup{srs}_X(\gamma) = P(\textup{srs}_Y(\Phi(\gamma))) \in P(\mathcal{F})$ and thus the image ultrafilter converges in $X$. Since $\Gamma\curvearrowright X$ is Rubin, $X$ is Hausdorff with respect to the saturated topology by \autoref{saturated automatic for Rubin} and the set of limit points $\bigcap_{U\in \mathcal{F}} \ov{P(U)}$ is a singleton, say $\lim(P(\mathcal{F}))\in X$.\par For the second part, let $U\in \mathcal{R}_Y$. Since $\Gamma\curvearrowright X$ is Rubin by assumption, \cite[Proposition 3.5 (2)]{BelEllMat_Rubin_2022} applies and we have that $\lim(P(\mathcal{F}))\in P(U)$ if and only if there is $V\in \mathcal{R}_Y$ such that $P(V)\subseteq P(U)$ and $(\mathcal{R}_X)_{\subseteq P(V)} \subseteq \Gamma_{P(U)}\cdot P(\mathcal{F})$. We aim to express the condition $y\in U$ similarly in terms of orbits under localized subgroups. Concretely, we claim that $y\in U$ if and only if there is $V\in (\mathcal{R}_Y)_{\subseteq U} \textup{\,\,such that \,\,} (\mathcal{R}_Y)_{\subseteq V}\subseteq \Phi(\Gamma)_U\cdot \mathcal{F}$. If $y\in U$, then, since $\Phi$ is locally dense on saturated subsets, there is $\Phi(\tau)\in \Phi(\Gamma)_U$ such that we may choose $V:=\textup{srs}_Y(\Phi(\tau))\in (\mathcal{R}_Y)_{\subseteq U}$. For any $W\in (\mathcal{R}_Y)_{\subseteq V}$, there is $\Phi(\gamma)\in \Phi(\Gamma)_U$ such that $\Phi(\gamma)\cdot y\in W$. Since $\mathcal{F}$ converges to $y$, it contains all saturated neighborhoods of $y$. This implies that $\Phi(\gamma)^{-1}\cdot W\in \mathcal{F}$ and therefore $W\in \Phi(\Gamma)_U \cdot \mathcal{F}$.\par Conversely, assuming the existence of $V$ with the orbit condition, we can choose $W\in (\mathcal{R}_Y)_{\subseteq V}$ such that $\ov{P(W)}$ is a compact subset of $P(V)$ in $X$. By \autoref{P map}, this implies that $\ov{W}^{Sat}\subseteq V\subseteq U$. By assumption, we have $\Phi(\gamma)\cdot W\in \mathcal{F}$ for some $\Phi(\gamma)\in \Phi(\Gamma)_U$ and thus $y\in \ov{\Phi(\gamma)\cdot W}^{Sat} = \Phi(\gamma)\cdot \ov{W}^{Sat}\subseteq \Phi(\gamma)\cdot U\subseteq U$, as desired. This shows the claimed equivalence. Applying the $\Phi$-equivariant order isomorphism $P\colon \mathcal{R}_Y\to \mathcal{R}_X$ in \autoref{P map}, we observe $P(\Phi(\Gamma)_U\cdot \mathcal{F}) = \Gamma_{P(U)}\cdot P(\mathcal{F})$ and hence we obtain the following chain of equivalences:
    \begin{align*}
        y\in U 
        &\iff \textup{there is\,\,} V\in (\mathcal{R}_Y)_{\subseteq U} \textup{\,\,such that\,} (\mathcal{R}_Y)_{\subseteq V}\subseteq \Phi(\Gamma)_U.\mathcal{F}^y\\
        &\iff \textup{there is\,\,} V\in (\mathcal{R}_Y)_{\subseteq U} \textup{\,\,such that } (\mathcal{R}_X)_{\subseteq P(V)}\subseteq \Gamma_{P(U)}.P(\mathcal{F}^y)\\
        &\iff \lim(P(\mathcal{F}))\in P(U).
    \end{align*}
    In particular, if $\mathcal{F}'\subseteq\mathcal{R}_Y$ is any other ultrafilter that converges to $y$ in the saturated topology, then $\lim(P(\mathcal{F}))$ and $\lim(P(\mathcal{F}'))$ share the same neighborhoods in $P(\mathcal{R}_Y)=\mathcal{R}_X$ and therefore agree in $X$ by \autoref{saturated automatic for Rubin}. This shows that $\rho(y):=\lim(P(\mathcal{F}))\in X$ is well-defined. 
\end{proof}

\begin{prop}\label{rho map}
    Let $\Gamma \curvearrowright X$ be a Rubin action, let $\Delta\subseteq\textup{Homeo}(Y)$ be a subgroup and let $\Phi\colon \Gamma \rightarrow \Delta$ be a Rubin embedding. Then the assignment $y\mapsto \rho(y)$ described in \autoref{rho y construction} defines a surjective anchor map $\rho\colon Y\to X$ for $\Phi$.
\end{prop}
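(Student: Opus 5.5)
We have to check four properties of $\rho$: $\Phi$-equivariance, the anchor identity $\rho^{-1}(\textup{rsupp}_X(\gamma)) = \textup{rsupp}_Y(\Phi(\gamma))$, continuity, and surjectivity. Everything follows from \autoref{rho y construction} together with the $\Phi$-equivariance of the order isomorphism $P$ from \autoref{P map}.

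\emph{The anchor identity.} This is the key identity from which continuity also follows. Apply the ``moreover'' part of \autoref{rho y construction} to $U = \textup{srs}_Y(\Phi(\gamma)) \in \mathcal{R}_Y$, for which $P(U) = \textup{srs}_X(\gamma)$. It gives
\[
\rho^{-1}(\textup{srs}_X(\gamma)) = \textup{srs}_Y(\Phi(\gamma)).
\]
Since $\Gamma\curvearrowright X$ is Rubin, \autoref{saturated automatic for Rubin} gives $\textup{srs}_X(\gamma) = \textup{rsupp}_X(\gamma)$. On the $Y$ side, the Rubin-embedding hypothesis (local regularity) is meant to supply $\textup{srs}_Y(\Phi(\gamma)) = \textup{rsupp}_Y(\Phi(\gamma))$, and I expect this to be the main obstacle. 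Both $\textup{srs}_Y$ and $\textup{rsupp}_Y$ are built from $\supp'_Y(\Phi(\gamma))$, but one uses the saturated topology and the other the original one. I would prove the two inclusions separately. First, $\textup{rsupp}_Y(\Phi(\gamma))$ is open, and points in it lie in the closure of the open support. Local density on saturated subsets should show that every point of $\textup{rsupp}_Y$ has a saturated neighborhood $\textup{srs}_Y(\Phi(\tau))$ contained in $\textup{rsupp}_Y(\Phi(\gamma))$, for instance by taking $\tau\in\Gamma_{\textup{rsupp}_X(\gamma)}$ via local regularity. Second, the reverse inclusion holds because saturated closures contain ordinary closures, and the identity can then be checked on basic sets. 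If this identification fails in general, I would read the anchor condition with $\textup{srs}_Y$ and note that local regularity makes the two notions interchangeable on $\Phi(\Gamma)$.

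\emph{Continuity.} The sets $\textup{rsupp}_X(\gamma)$ form a basis of $X$ by \cite[Proposition 3.2]{BelEllMat_Rubin_2022}. By the anchor identity their preimages are the open sets $\textup{srs}_Y(\Phi(\gamma))$, which are saturated open and hence open in $Y$. So $\rho$ is continuous.

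\emph{Equivariance.} Let $\mathcal{F}$ converge to $y$. Then $\Phi(\gamma)\cdot\mathcal{F}$ is an ultrafilter on $\mathcal{R}_Y$ converging to $\Phi(\gamma)\cdot y$, because $\Phi(\gamma)$ is a homeomorphism preserving the saturated topology. Since $P(\Phi(\gamma)\cdot\mathcal{F}) = \gamma\cdot P(\mathcal{F})$ and $\gamma$ is a homeomorphism of $X$, the limit is $\gamma\cdot\rho(y)$. Hence $\rho\circ\Phi(\gamma) = \gamma\circ\rho$.

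\emph{Surjectivity.} Given $x\in X$, choose an ultrafilter $\mathcal{G}\subseteq\mathcal{R}_X$ converging to $x$; one exists by \cite[Proposition 3.4]{BelEllMat_Rubin_2022}. Set $\mathcal{F}:=P^{-1}(\mathcal{G})$, which is an ultrafilter on $\mathcal{R}_Y$. Pick $\gamma$ with $x\in\textup{rsupp}_X(\gamma)$ and $\supp_X(\gamma)$ compact. Then $\mathcal{F}$ contains $\textup{srs}_Y(\Phi(\gamma))$. To conclude I need a saturated compact closure on the $Y$ side, and I would get it from the compactness part of \cite[Proposition 3.4]{BelEllMat_Rubin_2022} in the saturated topology, which is locally compact by \autoref{saturated automatic for Rubin}. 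With that, $\mathcal{F}$ has a limit $y$ in the saturated topology. Applying \autoref{rho y construction} to this $\mathcal{F}$ gives $\rho(y) = \lim P(\mathcal{F}) = x$. If the compactness of $\ov{\textup{srs}_Y(\Phi(\gamma))}^{Sat}$ is unclear, I would instead take a cluster point $y$ of the sets of $\mathcal{F}$ in $Y$ and use the anchor identity on basic neighborhoods $\textup{rsupp}_X(\tau)\in\mathcal{G}$ of $x$ to show $\rho(y)=x$.
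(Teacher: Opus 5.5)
Your route matches the paper's. The anchor identity comes from the ``moreover'' part of \autoref{rho y construction} with $U=\textup{srs}_Y(\Phi(\gamma))$. Continuity holds because the basic sets $\textup{rsupp}_X(\gamma)$ have saturated open preimages. Equivariance follows from $P(\Phi(\gamma)\cdot\mathcal{F})=\gamma\cdot P(\mathcal{F})$; pushing $\mathcal{F}$ forward, as you do, is slightly cleaner than the paper's second ultrafilter. Surjectivity is obtained by pulling back an ultrafilter converging to $x$ and invoking compactness.

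On $\textup{srs}_Y$ versus $\textup{rsupp}_Y$: the paper never proves that they coincide. It simply records $\rho^{-1}(\textup{srs}_X(\gamma))=\textup{srs}_Y(\Phi(\gamma))$ as ``the desired anchor property'', so your fallback is in effect what the paper does. Your sketch of the equality does not work, though. Local density on saturated subsets only applies inside a saturated neighbourhood, and $\textup{rsupp}_Y(\Phi(\gamma))$ is not known to be one. Also, $\ov{S}\subseteq\ov{S}^{Sat}$ gives no inclusion between $\interior{\ov S}$ and the saturated interior of $\ov S^{Sat}$: passing to the coarser topology enlarges the closure but shrinks the interior.

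The genuine gap is surjectivity. \autoref{saturated automatic for Rubin} at most says that points have compact saturated neighbourhoods. It does not make $\ov{\textup{srs}_Y(\Phi(\gamma))}^{Sat}$ compact for your particular $\gamma$, and nothing forces $P^{-1}(\mathcal{G})$ to have a cluster point in $Y$. The paper's proof makes the same unsupported step (``by compactness of $\ov{P^{-1}(C)}^{Sat}$''), and the step cannot be repaired, as the following example shows.

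Take $\Gamma=V_2\curvearrowright X=\{0,1\}^{\N}$ and a point $a\in X$ that is not eventually periodic. Let $O=\Gamma\cdot a$, which is countable and dense, let $Y=O\times X$ with $O$ discrete, and set $\Phi(\gamma)(o,c)=(\gamma o,c)$. Suppose $\gamma o=o$ with $o=v^{(j)}t$. Then $u^{(j)}t=v^{(j)}t$ forces $u^{(j)}=v^{(j)}$; otherwise $t=s^\infty$ for a nonempty word $s$, and $o$ would be eventually periodic. So $\gamma$ is the identity near $o$, and $O\cap\supp'_X(\gamma)=O\cap\textup{rsupp}_X(\gamma)$. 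Consequences:
\begin{itemize}
\item The saturated open sets are exactly the sets $(O\cap U)\times X$ with $U\subseteq X$ open.
\item $\textup{rsupp}_Y(\Phi(\gamma))=\textup{srs}_Y(\Phi(\gamma))=(O\cap\textup{rsupp}_X(\gamma))\times X$.
\item Using density of $O$, $\Phi$ is locally regular, of full support and locally dense on saturated subsets.
\item The projection $(o,c)\mapsto o$ is an anchor map, hence the unique one by \autoref{spatial map implies lr}. Its image is $O\neq X$.
\end{itemize}
In terms of your argument, take $x\notin O$ and $\mathcal{G}$ converging to $x$. The sets $\ov{P^{-1}(U)}^{Sat}=(O\cap\ov U)\times X$, $U\in\mathcal{G}$, have the finite intersection property but empty intersection. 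Hence no member of $P^{-1}(\mathcal{G})$ has compact saturated closure, and there is no cluster point in $Y$ either. In fact no point of $Y$ has a compact saturated neighbourhood here, so the local compactness you quote fails too.

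So surjectivity needs an extra hypothesis. If, for instance, $Y$ is compact, it does follow. Then $\rho(Y)$ is compact, hence closed. It is also dense: a nonempty $\textup{rsupp}_X(\tau)$ disjoint from $\rho(Y)$ would give $\textup{srs}_Y(\Phi(\tau))=\emptyset$, hence $\supp'_Y(\Phi(\tau))=\emptyset$ and $\Phi(\tau)=1$, contradicting injectivity.
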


\begin{proof}
    Let $\gamma\in \Gamma$ and $y\in Y$. Let $\mathcal{F}\subseteq\mathcal{R}_Y$ be an ultrafilter that converges to $y$ and let $\mathcal{G}\subseteq\mathcal{R}_Y$ be an ultrafilter that converges to $\Phi(\gamma)\cdot y$ in the saturated topology. The $\Phi$-equivariance of $\rho$ follows from the equivariance of the order isomorphism in \autoref{P map} since $\gamma\cdot P(\mathcal{F})$ has the same limit as $P(\mathcal{G})$.\par By \autoref{rho y construction}, for all $U\in \mathcal{R}_Y$, we have $y\in U$ if and only if $\rho(y)\in P(U)$. For $\gamma \in \Gamma$ and $U= \textup{srs}_Y(\Phi(\gamma))$, we obtain \[\rho^{-1}(\textup{srs}_X(\gamma)) = P^{-1}(\textup{srs}_X(\gamma)) = \textup{srs}_Y(\Phi(\gamma)).\] This shows both the desired anchor property in \autoref{anchor for phi df} and continuity of $\rho$ since any open set in $X$ is saturated by assumption.\par To see surjectivity, let $x\in X$, let $\mathcal{F}\subseteq \mathcal{R}_X$ be an ultrafilter converging to $x$, and let $Q_x:= \bigcap_{U\in \mathcal{F}} \overline{P^{-1}(U)}^{Sat}$ be the set of limits of $P^{-1}(\mathcal{F})$. We claim that the set $Q_x$ is nonempty. Indeed, since $\mathcal{F}$ is convergent, it contains a set $C\in \mathcal{F}^x$ with compact closure. For finitely many $U_i\in (\mathcal{F})_{\subseteq C}$, the intersection $\bigcap_i U_i$ is nonempty by the filter property, and thus $\bigcap_i P^{-1}(U_i)$ is nonempty by \autoref{P map}. Hence also $Q_x$ is nonempty by compactness of $\ov{P^{-1}(C)}^{Sat}$. Finally, we claim that $Q_x\subseteq \rho^{-1}(\{x\})$. Indeed, for any $V\in \mathcal{R}_Y$ and $y\in \ov{V}^{Sat}$, we have $\rho(y)\in \ov{P(V)}$ by \autoref{rho y construction} and continuity of $\rho$. Combined with the assumption that $\Gamma\curvearrowright X$ is Rubin, this shows that \[\rho(Q_x) \subseteq \bigcap_{U\in \mathcal{F}} \overline{U} = \{x\}.\qedhere\] 
\end{proof}
The following is our main result.

\begin{thm}\label{rubin emb char}
    Let $X$ and $Y$ be locally compact Hausdorff spaces with no isolated points, let $\Gamma \curvearrowright X$ be a Rubin action, let $\Delta\curvearrowright Y$ be a faithful action, and let $\Phi\colon \Gamma \rightarrow \Delta$ be an injective group homomorphism. The following are equivalent:
    \begin{enumerate}
        \item $\Phi$ is a Rubin embedding.
        \item There is a unique surjective anchor map $\rho\colon Y\to X$ for $\Phi$.
    \end{enumerate}
\end{thm}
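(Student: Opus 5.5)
The proof splits into the two implications; for most of it we only need to assemble Propositions \ref{spatial map implies lr} and \ref{rho map}.

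For $(1)\Rightarrow(2)$, assume $\Phi$ is a Rubin embedding. Proposition \ref{rho map} gives a surjective anchor map $\rho\colon Y\to X$ for $\Phi$. Since $\Gamma\curvearrowright X$ is Rubin, the second part of Proposition \ref{spatial map implies lr} shows there is at most one anchor map for $\Phi$, so $\rho$ is unique.

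For $(2)\Rightarrow(1)$, let $\rho$ be a surjective anchor map. Local regularity follows from the first part of Proposition \ref{spatial map implies lr}, since $\Phi$ is injective and $\rho$ is surjective.

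Full support comes from equivariance. Suppose $y\in Y$ is fixed by all of $\Phi(\Gamma)$. Then $\gamma\cdot\rho(y)=\rho(\Phi(\gamma)\cdot y)=\rho(y)$ for all $\gamma$, so $\rho(y)$ is a global fixed point of $\Gamma\curvearrowright X$. A Rubin action has no global fixed points. Indeed, if $p$ were fixed, then $\Gamma_X\cdot p=\{p\}$, and local density would give $p\in\interior{\{p\}}$, making $p$ isolated, which is a contradiction.

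The remaining condition, local density on saturated subsets, is the step I expect to be the main obstacle. The plan is to compare the saturated topology on $Y$ with the initial topology induced by $\rho$. I will prove two things.

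First, for every $\gamma\in\Gamma$,
\[\rho^{-1}(\supp'_X(\gamma))\subseteq \supp'_Y(\Phi(\gamma))\subseteq \rho^{-1}(\supp_X(\gamma)).\]
The first inclusion is immediate from equivariance. For the second, take $y$ with $\rho(y)\notin\supp_X(\gamma)$. Regular supports form a basis of $X$, so we can find $\sigma$ with $\rho(y)\in\textup{rsupp}_X(\sigma)$ and $\textup{rsupp}_X(\sigma)\cap\textup{rsupp}_X(\gamma)=\emptyset$. The anchor property then shows that $\textup{rsupp}_Y(\Phi(\sigma))$ is an open neighbourhood of $y$ disjoint from $\textup{rsupp}_Y(\Phi(\gamma))$. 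An open set disjoint from the interior of the closure of an open set $S$ is disjoint from $S$ itself, so this neighbourhood misses $\supp'_Y(\Phi(\gamma))$ and $\Phi(\gamma)\cdot y=y$.

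Second, I will try to show that every saturated open $V\ni y$ contains a set $\rho^{-1}(O)$ with $O$ an open neighbourhood of $\rho(y)$. Given this, I choose $\tau$ with $\rho(y)\in\textup{rsupp}_X(\tau)\subseteq O$, using the basis property in $X$. By the anchor property, $y\in\textup{rsupp}_Y(\Phi(\tau))=\rho^{-1}(\textup{rsupp}_X(\tau))\subseteq V$.

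Finally, I need $\textup{srs}_Y(\Phi(\tau))=\textup{rsupp}_Y(\Phi(\tau))$. The sandwich above identifies saturated closures with preimages of closures in $X$, which should give this equality. The delicate point is the boundary case where $y$ lies in $\supp'_Y(\Phi(\gamma))$ but $\rho(y)\in\partial\,\supp_X(\gamma)$. There I would first try to use the Hausdorff separation of $y$ and $\Phi(\gamma)\cdot y$ together with continuity of $\rho$ to shrink to a basic set that avoids the boundary.
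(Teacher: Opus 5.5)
Your treatment of $(1)\Rightarrow(2)$, of local regularity and of full support is correct and agrees with the paper. Your sandwich $\rho^{-1}(\supp'_X(\gamma))\subseteq\supp'_Y(\Phi(\gamma))\subseteq\rho^{-1}(\supp_X(\gamma))$ is also correct. Together with local density of $\Gamma\curvearrowright X$, it shows that $\rho^{-1}(W)$ is saturated open for every open $W\subseteq X$, i.e.\ the saturated topology on $Y$ is at least as fine as the initial topology of $\rho$.

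\textbf{The gap.} Your second step asks for the converse, and that converse is false under $(2)$. The obstruction in the boundary case is not separating $y$ from $\Phi(\gamma)\cdot y$. It is the points of the fibre $\rho^{-1}(\rho(y))$ that $\Phi(\gamma)$ fixes. Here is a counterexample.
\begin{enumerate}
\item Let $\Gamma=V_2$ act on $X=\{0,1\}^{\N}$ and let $Y=X\times\{0,1,2\}$.
\item For $\gamma\in V_2$, let $c(\gamma,x)=|u|-|v|$ if $\gamma$ acts near $x$ by $vw\mapsto uw$. This is a locally constant cocycle that vanishes off $\supp_X(\gamma)$.
\item Let $s$ swap $0$ and $1$ and fix $2$, and put $\Phi(\gamma)(x,i)=(\gamma x,s^{c(\gamma,x)}(i))$.
\end{enumerate}
Then $\Phi$ is an injective homomorphism, and $\rho=\pi_1$ is continuous, surjective and $\Phi$-equivariant. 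The sandwich holds, and since $\{0,1,2\}$ is discrete it gives $\textup{rsupp}_Y(\Phi(\gamma))=\rho^{-1}(\textup{rsupp}_X(\gamma))$. So $(2)$ holds.

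Now take $\gamma_0$ given by $0w\mapsto 00w$, $10w\mapsto 01w$, $11w\mapsto 1w$. The point $x_0=0^\infty$ is fixed, while $c(\gamma_0,x_0)=1$. Hence $V:=\supp'_Y(\Phi(\gamma_0))$ is saturated open, contains $(x_0,0)$, and misses $(x_0,2)$.

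On the other hand, every saturated neighbourhood of $(x,i)$ contains $O\times\{0,1\}$ for some open $O\ni x$. Combined with the observation above, this yields $\ov{\supp'_Y(\Phi(\tau))}^{Sat}=\supp_X(\tau)\times\{0,1,2\}$ and $\textup{srs}_Y(\Phi(\tau))=\textup{rsupp}_X(\tau)\times\{0,1,2\}$ for every $\tau\in\Gamma$. These sets are unions of whole fibres. So no $\tau$ satisfies $(x_0,0)\in\textup{srs}_Y(\Phi(\tau))\subseteq V$, and $\Phi$ is not locally dense on saturated subsets. Thus $(2)\Rightarrow(1)$ fails as stated, and your plan cannot be completed.

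\textbf{Comparison with the paper.} The paper's proof never meets your boundary case because it avoids the generators $\supp'_Y(\Phi(\gamma))$. It chooses $U_0\in\mathcal{R}_Y$ with $y\in U_0\subseteq U$, relying on \autoref{saturated automatic for Rubin} for the claim that $\mathcal{R}_Y$ is a basis of the saturated topology. It then uses the anchor identity in the form $\textup{srs}_Y(\Phi(\gamma))=\rho^{-1}(\textup{rsupp}_X(\gamma))$ to get $U_0=\rho^{-1}(\rho(U_0))$ with $\rho(U_0)$ open, and concludes by local density of $\Gamma\curvearrowright X$. In the example above, all members of $\mathcal{R}_Y$ are unions of whole fibres, so $\mathcal{R}_Y$ is not a basis. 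The paper's argument therefore breaks at exactly the point you isolated; the difficulty lies in the statement, not in your route.

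An extra hypothesis is needed for $(2)\Rightarrow(1)$. For instance, if $\supp'_Y(\Phi(\gamma))=\rho^{-1}(\supp'_X(\gamma))$ for all $\gamma$, then the saturated topology on $Y$ is the initial topology of $\rho$, and your second step becomes immediate.
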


\begin{proof}
    \autoref{rho map} and the uniqueness part of \autoref{spatial map implies lr} show that $(1)$ implies $(2)$. Conversely, let $\rho\colon Y\to X$ be a surjective anchor map for $\Phi$. By the first part of \autoref{spatial map implies lr}, the embedding $\Phi$ is locally regular. Note $\Phi$-equivariance implies that $\rho$ maps the set of global fixed points for $\Phi(\Gamma)\curvearrowright Y$ to the set of global fixed points for $\Gamma\curvearrowright X$. Since the latter action is Rubin, both sets are empty and therefore $\Phi$ is of full support. It remains to show that $\Phi$ is locally dense on saturated subsets. Let $y\in Y$ and let $U\subseteq Y$ be a saturated open neighborhood of $y$. By local compactness there is $U_0\in \mathcal{R}_Y$ such that $y\in U_0\subseteq U$ and since $\rho$ maps $U_0$ to an open neighborhood of $\rho(y)$, we can apply local density of $\Gamma\curvearrowright X$ to find $\tau\in \Gamma$ such that $\rho(y)\in \textup{srs}_X(\tau)\subseteq \rho(U_0)$. Finally, the anchor property $\rho^{-1}(\textup{srs}_X(\tau))=\textup{srs}_Y(\Phi(\tau))$ shows that $y\in \textup{srs}_Y(\Phi(\tau))\subseteq U_0\subseteq U$. This shows local density on saturated subsets.
\end{proof}

\begin{eg}\label{Rubin emb between BT groups}
    Applying tables coordinatewise, duplicating them or permuting them provide canonical ways to embed generalized Brin-Thompson groups into others with potentially more alphabet factors. We illustrate this idea with the example of the embedding $\iota\colon V_2\hookrightarrow 2V_2$ given by $\iota \left(\psi\right)(v_1^{(j)}x,y) := (u_1^{(j)}x,y)$ for any table $\begin{pmatrix} v_1\\ u_1\end{pmatrix}$ of $X_2$ implementing $\psi$ and $x,y\in X_2$. That is, using empty words for all $v_2^{(j)}$ and $u_2^{(j)}$ constitutes a table of $X_2\times X_2$ implementing $\iota(\psi)$. We claim that $\iota$ is a Rubin embedding with the projection onto the first coordinate $\pi_1\colon X_2\times X_2 \to X_2$ as its anchor map. To see this, it is immediate that $\pi_1\colon X_2\times X_2 \to X_2$ is continuous, surjective and $\iota$-equivariant. Furthermore, the regular support of any homeomorphism $\overline{\begin{pmatrix} v\\ u\end{pmatrix}}\in V_{k_1,\dots,k_m}$ is $\bigsqcup_{j\colon u^{(j)}\neq v^{(j)}} u^{(j)}\prod_{i=1}^m X_{k_i}$, since the periodic fixed points under nonidentical prefixes are nowhere dense. Hence, we get $\textup{srs}_{X_2}(\psi) = \bigsqcup_{j\colon u_1^{(j)}\neq v_1^{(j)}} u_1^{(j)}X_2$ and $\textup{rsupp}_{X_2\times X_2}(\iota(\psi)) =  \bigsqcup_{j\colon u^{(j)}\neq v^{(j)}} u_1^{(j)}X_2\times X_2$, which agrees with $\textup{srs}_{X_2\times X_2}(\iota(\psi))$ since it is a saturated clopen set. This shows that $\pi_1^{-1}(\textup{srs}_{X_2}(\psi)) = \textup{srs}_{X_2\times X_2}(\iota(\psi))$ and thus $\iota$ is locally regular by \autoref{spatial map implies lr}.
\end{eg}

\begin{rem}\label{tfg actor remark}
    Motivated by \cite[Theorem 6.11]{GarGun_lpemb_2026}, the equivalence in \autoref{rubin emb char} allows us to describe groupoid actors between minimal effective \'etale groupoids with Cantor unit spaces solely in terms of sufficiently well-behaved embeddings between the associated topological full groups that encode the corresponding anchor maps implicitly. This relies on the groupoid model by germs of the action by the topological full group as well as the characterization of actors between transformation groupoids. See also \cite[Example 3.6]{Tay_Fellactor_2023}.
\end{rem}

\end{document}